\definecolor{ddarkbrown}{rgb}{0.5,0.2,0.05} \definecolor{bbluegray}{rgb}{0.05,0,0.5}
\newtheorem{theorem}{Theorem}[section]
\newtheorem{proposition}[theorem]{Proposition}
\newtheorem{lemma}[theorem]{Lemma}
\newtheorem{corollary}[theorem]{Corollary}
\renewenvironment{proof}{\textbf{Proof.}}{\QED\bigskip}
\newcommand{\BEAS}{\begin{eqnarray*}}
\newcommand{\EEAS}{\end{eqnarray*}}
\newcommand{\BEA}{\begin{eqnarray}}
\newcommand{\EEA}{\end{eqnarray}}
\newcommand{\BEQ}{\begin{equation}}
\newcommand{\EEQ}{\end{equation}}
\newcommand{\BIT}{\begin{itemize}}
\newcommand{\EIT}{\end{itemize}}
\newcommand{\BNUM}{\begin{enumerate}}
\newcommand{\ENUM}{\end{enumerate}}
\newcommand{\BA}{\begin{array}}
\newcommand{\EA}{\end{array}}
\newcommand{\ones}{\mathbf 1}
\newcommand{\reals}{{\mathbb R}}
\newcommand{\symm}{{\mbox{\bf S}}}  
\newcommand{\diam}{\mathop{\bf diam}}
\newcommand{\Rank}{\mathop{\bf Rank}}
\newcommand{\Card}{\mathop{\bf Card}}
\newcommand{\Tr}{\mathop{\bf Tr}}
\newcommand{\diag}{\mathop{\bf diag}}
\newcommand{\lambdamax}{{\lambda_{\rm max}}}
\newcommand{\idm}{\mathbf{I}}
\newcommand{\Expect}{\textstyle\mathop{\bf E}}
\newcommand{\Prob}{\mathop{\bf Prob}}
\newcommand{\QED}{~~\rule[-1pt]{6pt}{6pt}}
\newcommand{\dsp}{\displaystyle}
\newcommand{\Sht}{\hat \Sigma}
\begin{document}
\title{Approximation Bounds for Sparse Principal Component Analysis}
\author{Alexandre d'Aspremont}
\address{In alphabetical order.\vskip 1ex CMAP, Ecole Polytechnique, UMR CNRS 7641, Palaiseau, France.}
\email{alexandre.daspremont@m4x.org}

\author{Francis Bach}
\address{INRIA, SIERRA Project-Team,\vskip 0ex Laboratoire d'Informatique de l'\'Ecole Normale Sup\'erieure, Paris, France.}
\email{francis.bach@ens.fr}

\author{Laurent El Ghaoui}
\address{EECS  Department, U.C. Berkeley, Berkeley, CA 94720, USA.}
\email{elghaoui@eecs.berkeley.edu}

\keywords{Sparse PCA, convex relaxation, semidefinite programming, approximation bounds, detection.}
\date{\today}
\subjclass[2010]{62H25, 90C22, 90C27.}

\begin{abstract}
We produce approximation bounds on a semidefinite programming relaxation for sparse principal component analysis. These bounds control approximation ratios for tractable statistics in hypothesis testing problems where data points are sampled from Gaussian models with a single sparse leading component. 
\end{abstract}
\maketitle

We study approximation bounds for a semidefinite relaxation of the sparse eigenvalue problem, written here in penalized form
\[
\max_{\|x\|_2=1} ~ x^T\Sigma x - \rho \Card(x)
\]
in the variable $x\in\reals^n$, where $\Sigma\in\symm_n$ and $\rho\geq 0$. Sparse eigenvalues appear in many applications in statistics and machine learning. Sparse eigenvectors are often used, for example, to improve the interpretability of principal component analysis, while sparse eigenvalues control recovery thresholds in compressed sensing \citep{Cand07}. Several convex relaxations and greedy algorithms have been developed to find approximate solutions (see  \citet{dasp04a,dAsp08b,Jour08,Jour08a} among others), but except in simple scenarios where $\rho$ is  small and the two leading eigenvalues of $\Sigma$ are separated, very little is known about the tightness of these approximation methods.

Here, using randomization techniques based on \citep{Ben-02}, we derive simple approximation bounds for the semidefinite relaxation derived in \citep*{dAsp08b}. We do not produce a constant approximation ratio and our bounds depend on the optimum value of the semidefinite relaxation: the higher this value, the better the approximation. A similar behavior was observed by \citet{Zwic99} for the semidefinite relaxation to MAXCUT, who showed that the classical approximation ratio of \citet{Goem95} can be improved when the value of the cut is high enough.

We then show that, in some applications, it is possible to bound a priori the optimum value of the semidefinite relaxation, hence produce a lower bound on the approximation ratio. In particular, following recent works by \citep{Amin08,Bert12}, we focus on the problem of detecting the presence of a (significant) sparse principal component in a Gaussian model, hence test the significance of eigenvalues isolated by sparse principal component analysis. More precisely, we apply our approximation results to the problem of discriminating between the two Gaussian models
\[
\mathcal{N}\left(0,\idm_n\right)
\quad \mbox{and} \quad
\mathcal{N}\left(0,\idm_n+\theta vv^T\right)
\]
where $v\in\reals^n$ is a sparse vector with unit Euclidean norm and cardinality $k$. 
We use a convex relaxation for the sparse eigenvalue problem to produce a tractable statistic for this hypothesis testing problem and show that in a high-dimensional setting where the dimension $n$, the number of samples $m$ and the cardinality $k$ grow towards infinity proportionally, the detection threshold on $\theta$ remains finite.

More broadly speaking, in the spirit of smoothed analysis \citep{Spie01}, this shows that analyzing the performance of semidefinite relaxations on random problem instances is sometimes easier and provides a somewhat more realistic description of typical approximation ratios. Another classical example of this phenomenon is a MAXCUT-like problem arising in statistical physics, for which explicit (asymptotic) formulas can be derived for certain random instances, e.g. the Parisi formula \citep{Meza87,Meza09,Tala10} for computing the ground state of spin glasses in the Sherrington-Kirkpatrick model. It thus seems that comparing the performance of convex relaxations on random problem instances (e.g. in detection problems) often yields a more nuanced understanding of their performance in cases where uniform approximation ratios are hard to derive or analyze.

The paper is organized as follows. The next section recalls a few definitions on sparse eigenvalue problems. Section~\ref{s:relax} recalls the construction of the semidefinite relaxation in \citep{dAsp08b}. Section~\ref{s:approx} derives approximation bounds on the solution of this relaxation. Section~\ref{s:detect} studies the performance of these approximation bounds on the sparse eigenvector detection problem. Section~\ref{s:algos} presents some algorithms for solving the semidefinite relaxation used as a test statistic in the detection problem. Finally, we present some numerical results in Section~\ref{s:numexp}.

\section{Sparse eigenvalues}
We begin by formally defining sparse eigenvalues. Let $\Sigma\in\symm_n$ be a symmetric matrix. We define the sparse maximum eigenvalues of the matrix $\Sigma$ as
\BEQ\label{eq:sp-max-eig}
\BA{rll}
\lambda_\mathrm{max}^k(\Sigma)\triangleq &\mbox{max.} & x^T \Sigma x\\
&\mbox{s.t.} & \Card(x)\leq k\\
&& \|x\|_2=1,
\EA\EEQ
in the variable $x\in\reals^n$ where the parameter $k>0$ controls the sparsity of the solution. We can similarly define sparse minimum eigenvalues as
\BEQ\label{eq:sp-min-eig}
\BA{rll}
\lambda_\mathrm{min}^k(\Sigma)\triangleq &\mbox{min.} & x^T \Sigma x\\
&\mbox{s.t.} & \Card(x)\leq k\\
&& \|x\|_2=1,
\EA\EEQ
in the variable $x\in\reals^n$. Because $\lambda_\mathrm{max}^k(\Sigma+ \alpha \idm)$ is affine in $\alpha$, we have
\[
\lambda_\mathrm{min}^k(\Sigma) = \lambda_\mathrm{max}(\Sigma)-\lambda_\mathrm{max}^k\left(\lambda_\mathrm{max}(\Sigma)\idm-\Sigma\right)
\]
and the following sections will be focused on approximating $\lambda_\mathrm{max}^k(\Sigma)$.

\section{Semidefinite relaxation} \label{s:relax}
Here, we first recall the semidefinite relaxation for~\eqref{eq:sp-max-eig} derived in~\citep*{dAsp08b}. We assume without loss of generality that $\Sigma\in\symm_n$ is positive semidefinite (we can always add a multiple of the identity) and that the $n$ variables are ordered by decreasing marginal variances, i.e. that $\Sigma_{11} \geq \ldots \geq \Sigma_{nn}$. We also assume that we are given a square root $A$ of the matrix $\Sigma$ with 
\[
\Sigma = A^T A,
\] 
where $A\in\reals^{n \times n}$ and we denote by $a_1,\dots,a_n \in \reals^n$ the columns of $A$. Note that the problem and our algorithms are invariant by permutations of $\Sigma$ and by the choice of square root $A$. In practice, we are very often given the data matrix $A$ instead of the covariance $\Sigma$. As we will see below, we can directly exclude variables for which $\Sigma_{ii}<\rho$, hence we can assume w.l.o.g. that 
\[
0 < \rho < \min_{i\in[1,n]} \Sigma_{ii}.
\] 
If this condition is not satisfied, the variable $i$ will never be part of the optimal support and we can focus on the reduced problem.

\subsection{Relaxation bounds on sparse eigenvalues}
We can rewrite the maximum eigenvalue problem in terms of the data matrix $A$. We start by writing
\BEAS
\lambda_\mathrm{max}^k(\Sigma) &=& \max_{\substack{\Card(x)\leq k\\ \|x\|_2=1}}~ x^T \Sigma x\\
&=& \max_{\substack{u\in\{0,1\}^n\\ \ones^Tu=k}} ~ \lambda_\mathrm{max}\left(\sum_{i=1}^n u_i a_ia_i^T\right)\\
&=& \max_{\substack{u\in\{0,1\}^n\\ \ones^Tu=k}} ~ \max_{\|x\|_2=1} ~ \sum_{i=1}^n u_i (a_i^Tx)^2\\
&=& \max_{\|x\|_2=1} ~ \max_{\substack{u\in\{0,1\}^n\\ \ones^Tu=k}} ~ \sum_{i=1}^n u_i (a_i^Tx)^2,
\EEAS
and use the fact that
\[
\max_{\substack{u\in\{0,1\}^n\\ \ones^Tu=k}} ~ \sum_{i=1}^n u_i b_i ~=~ \min_{\rho \geq 0} ~\left\{ \sum_{i=1}^n (b_i-\rho)_+ + \rho k \right\}
\]
for any $b\in\reals^n$, to write
\BEAS
&& \max_{\|x\|_2=1} ~ \max_{\substack{u\in\{0,1\}^n\\ \ones^Tu=k}} ~ \sum_{i=1}^n u_i (a_i^Tx)^2\\
&= & \max_{\|x\|_2=1} ~ \min_{\rho \geq 0} ~\left\{ \sum_{i=1}^n \left((a_i^Tx)^2-\rho\right)_+ + \rho k\right\}\\
&=& \max_{\substack{\Rank(X)=1\\X\succeq 0, \Tr(X)=1}} ~ \min_{\rho \geq 0} ~\left\{ \sum_{i=1}^n \left(X^{1/2}(a_ia_i^T-\rho\idm)X^{1/2}\right)_+ + \rho k\right\},
\EEAS
where the last equality follows from the fact, when $\Rank(X)=1$ the only potentially nonnegative eigenvalue of $\left(X^{1/2}(a_ia_i^T-\rho\idm)X^{1/2}\right)$ is $(a_i^Tx)^2-\rho$, and $X^{1/2}=X=xx^T$ when $\|x\|_2=1$. We then produce a semidefinite relaxation for~\eqref{eq:sp-max-eig} by simply dropping the rank constraint to get the following bound
\BEQ\label{eq:sdp-max-eig}
\lambda_\mathrm{max}^k(\Sigma) ~\leq ~\min_{\rho \geq 0} ~\left\{ \max_{\substack{\Tr(X)=1\\X\succeq 0}} ~ \sum_{i=1}^n \left(X^{1/2}(a_ia_i^T-\rho\idm)X^{1/2}\right)_+ + \rho k\right\} 
\EEQ
which is equivalent to a semidefinite program. Note that because $\Rank(X)=1$ defines a non-convex set, we cannot simply switch the $\min$ and the $\max$ and this last inequality is potentially strict. i.e. the semidefinite relaxation only produces an upper bound on $\lambda_\mathrm{max}^k(\Sigma)$.

\subsection{Penalized problem} We now focus on the inner optimization problem in~\eqref{eq:sdp-max-eig}. Starting from a penalized version of problem~\eqref{eq:sp-max-eig}, written
\BEQ\label{eq:sp-pen-eig}
\phi(\rho) \triangleq \max_{\|x\|_2=1} ~ x^T\Sigma x - \rho \Card(x)
\EEQ
it was shown in \cite{dAsp08b} that
\BEAS
\phi(\rho)&=& \max_{\|x\|_2=1} ~ \sum_{i=1}^n\left((a_i^Tx)^2-\rho\right)_+\\
&=& \max_{\substack{\Rank(X)=1\\X\succeq 0, \Tr(X)=1}} ~  \sum_{i=1}^n\Tr \left(X^{1/2}(a_ia_i^T-\rho\idm)X^{1/2}\right)_+
\EEAS
and we write $\psi(\rho)$ the semidefinite relaxation of this last problem
\BEQ \label{eq:psi}
\BA{rll}
\psi(\rho) \triangleq &\mbox{max.}&  \sum_{i=1}^n\Tr(X^{1/2}a_ia_i^TX^{1/2}-\rho X)_+ \\
&\mbox{s.t.} & \Tr(X)=1,~X\succeq 0,
\EA\EEQ
which is equivalent to a semidefinite program \citep{dAsp08b} and is the inner problem in~\eqref{eq:sdp-max-eig}. In the next section, we use this quantity as a test statistic for detecting significant sparse eigenvectors.

\section{Approximation Bounds}\label{s:approx}
Using the randomization argument detailed in \citep{Ben-02,El-G06}, we can derive an explicit bound on the quality of the semidefinite relaxation~\eqref{eq:psi}.
\begin{proposition}\label{eq:prop-approx}
Let us call $X$ the optimal solution to problem~\eqref{eq:psi} and let $r=\Rank(X)$, we have 
\BEQ\label{eq:rand-bounds}
n\rho~ \vartheta_r\left(\frac{\psi(\rho)}{n\rho}\right) \leq\phi(\rho)\leq \psi(\rho),
\EEQ
where
\BEQ\label{eq:theta-r}
\vartheta_r(x) \triangleq \Expect \left[\left(x \xi_1^2 - \frac{1}{r-1} \sum_{j=2}^r  \xi_j^2 \right)_+\right]
\EEQ
controls the approximation ratio.
\end{proposition}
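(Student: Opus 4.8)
The right-hand inequality $\phi(\rho)\le\psi(\rho)$ is immediate, since $\psi(\rho)$ maximizes the same objective as the matrix form of $\phi(\rho)$ over the larger feasible set obtained by dropping the rank-one constraint on $X$. For the left-hand inequality the plan is a Gaussian randomized-rounding argument in the spirit of \citep{Ben-02,El-G06}. Take the optimal $X$ of \eqref{eq:psi}, factor it as $X=BB^T$ with $B\in\reals^{n\times r}$ of full column rank, and set $b_i\triangleq B^Ta_i$, $C_i\triangleq b_ib_i^T-\rho\,B^TB\in\symm_r$. Restricting the $n\times n$ matrix $X^{1/2}a_ia_i^TX^{1/2}-\rho X$ to the range of $X$ identifies its positive eigenvalues with those of $C_i$, so $\Tr(X^{1/2}a_ia_i^TX^{1/2}-\rho X)_+=\Tr(C_i)_+$ and hence $\psi(\rho)=\sum_{i=1}^n\Tr(C_i)_+$. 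Now draw $\xi\sim\mathcal{N}(0,\idm_r)$. Since $x\mapsto\sum_i((a_i^Tx)^2-\rho\|x\|_2^2)_+$ is positively homogeneous of degree two, $\phi(\rho)\,\|x\|_2^2\ge\sum_i((a_i^Tx)^2-\rho\|x\|_2^2)_+$ for every $x$; specializing to $x=B\xi$ and using $(a_i^TB\xi)^2-\rho\|B\xi\|_2^2=\xi^TC_i\xi$ turns this into the \emph{pointwise} bound $\phi(\rho)\,\xi^TB^TB\xi\ge\sum_i(\xi^TC_i\xi)_+$. Taking expectations and using $\Expect[\xi^TB^TB\xi]=\Tr(X)=1$ then gives $\phi(\rho)\ge\sum_{i=1}^n\Expect[(\xi^TC_i\xi)_+]$. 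I expect this to be the one genuinely delicate point: the naive rounding $x=B\xi/\|B\xi\|_2$ would leave a random denominator inside the expectation, and it is degree-two homogeneity together with $\Expect\|B\xi\|_2^2=1$ that removes it cleanly.

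Next I would reduce each $\Expect[(\xi^TC_i\xi)_+]$ to a scalar Gaussian expectation. By rotational invariance, $\xi^TC_i\xi$ has the same law as $\sum_{j=1}^r\mu_j\xi_j^2$, where $\mu_1\ge\cdots\ge\mu_r$ are the eigenvalues of $C_i$ and the $\xi_j$ are i.i.d.\ $\mathcal{N}(0,1)$. From $C_i\preceq b_ib_i^T$ we read off $\mu_j\le0$ for $j\ge2$ (so $\Tr(C_i)_+=(\mu_1)_+$) and $\mu_1\le\|b_i\|_2^2$; since $\Tr(C_i)=\|b_i\|_2^2-\rho$, this forces $\sum_{j=2}^r(-\mu_j)=\mu_1-\Tr(C_i)\le\rho$. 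In other words, $C_i$ has at most one positive eigenvalue, and its $r-1$ remaining eigenvalues have absolute values summing to at most $\rho$.

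Finally I would symmetrize. For fixed $\mu_1$ the map $(\eta_2,\dots,\eta_r)\mapsto\Expect\big[(\mu_1\xi_1^2-\sum_{j=2}^r\eta_j\xi_j^2)_+\big]$ is convex in $\eta$ and, by exchangeability of $\xi_2,\dots,\xi_r$, invariant under permuting the $\eta_j$; averaging over permutations and applying Jensen shows it does not increase when all the $\eta_j$ are replaced by their common average. Evaluating at $\eta_j=-\mu_j\ge0$, whose average is at most $\rho/(r-1)$, and using that the value is nonincreasing in that average, I get $\Expect[(\xi^TC_i\xi)_+]\ge\Expect\big[(\mu_1\xi_1^2-\tfrac{\rho}{r-1}\sum_{j=2}^r\xi_j^2)_+\big]=\rho\,\vartheta_r((\mu_1)_+/\rho)$, the last step after scaling out $\rho$ (both sides vanish if $\mu_1\le0$). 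Summing over $i$, using that $\vartheta_r$ is convex with $\vartheta_r(0)=0$, and applying Jensen once more with $\sum_i(\mu_1)_+=\psi(\rho)$ gives $\sum_i\vartheta_r((\mu_1)_+/\rho)\ge n\,\vartheta_r(\psi(\rho)/(n\rho))$, whence $\phi(\rho)\ge n\rho\,\vartheta_r(\psi(\rho)/(n\rho))$. (When $r=1$ the relaxation is already tight and the bound is trivial, with the empty sum in \eqref{eq:theta-r} read as zero.) Apart from the normalization step flagged above, everything here is Jensen's inequality and elementary spectral comparisons, so I would not anticipate further obstacles.
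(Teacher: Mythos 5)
Your proof is correct and follows essentially the same route as the paper's: the same Gaussian rounding through a factorization of the optimal $X$, the same bound $\sum_{j\ge 2}(-\mu_j)\le\rho$ on the negative spectral mass of each $C_i$, the same symmetrization to the uniform weights $\rho/(r-1)$, and a Jensen step to aggregate over $i$ (the paper uses superadditivity of $(\cdot)_+$ inside the expectation, which is equivalent). The only real difference is that you replace the paper's pigeonhole step (``there exists a nonzero $\xi$ such that\ldots'' followed by an explicit construction of the support $v$) with the pointwise degree-two homogeneity bound $\phi(\rho)\,\xi^T X\xi\ge\sum_i(\xi^T B_i(X)\xi)_+$ integrated against the Gaussian measure --- a slightly cleaner packaging of the identical argument.
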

\begin{proof}
We can assume w.l.o.g. that $0 < \rho < \min_{i\in[1,n]} \Sigma_{ii}$. 
This means that $B_i(X)=X^{1/2}(a_ia_i^T-\rho\idm)X^{1/2}$ has rank $r$ and exactly one positive eigenvalue $\alpha_i$, with $\alpha_i=\Tr B_i(X)_+$ for $i=1,\ldots,n$. We also denote by $-\beta^i_j$ for $j=2,\ldots,k$, the $(k-1)$ negative eigenvalues of $B_i(X)$. We follow the randomization procedure in \citep{Ben-02,El-G06} and let $\xi$ denote normally distributed variables on $\reals^n$, we have, using the rotational invariance of the normal distribution
\[
\Expect \left[\left( \xi^T B_i(X) \xi\right)_+\right] = \Expect \left[\left(\alpha_i\xi_1^2 - \sum_{j=2}^r \beta^i_j \xi_j^2 \right)_+\right], \quad \mbox{for }i=1,\ldots,n.
\]
We then get
\[
\sum_{j=2}^r \beta_j^i = \Tr(B(X))_+ -\Tr(B(X))= \alpha_i - (a_i^TXa_i-\rho) \leq \rho
\] 
because $\lambdamax(B_i(X))\leq a_i^TXa_i$, hence
\BEAS
\Expect \displaystyle \left[(\xi^TB_i(X)\xi)_+ \right] 
& \geq & \min_\beta \left\{\Expect \left[ \left(\alpha_i\xi_1^2 - \sum_{j=2}^r \beta^i_j \xi_j^2 \right)_+\right] :~ \sum_{j=2}^r \beta_j^i \leq \rho,~ \beta_j^i \geq 0 \right\}\\
& = & \Expect \left[\left(\alpha_i\xi_1^2 - \frac{\rho}{r-1} \sum_{j=2}^r  \xi_j^2 \right)_+\right]
\EEAS
by convexity and symmetry. By homogeneity and convexity, with $\psi(\rho) =\sum_{i=1}^n\alpha_i$, we then get
\BEAS
\Expect \displaystyle \left[\sum_{i=1}^n (\xi^TB_i(X)\xi)_+\right] & \geq & \sum_{i=1}^n \Expect \left[\left(\alpha_i\xi_1^2 - \frac{\rho}{r-1} \sum_{j=2}^r  \xi_j^2 \right)_+\right]\\
& \geq & \Expect \left[\left(\psi(\rho) \xi_1^2 -\frac{n\rho}{r-1} \sum_{j=2}^r  \xi_j^2 \right)_+\right],
\EEAS
and we define $\vartheta_r(x)$ as in\eqref{eq:theta-r} above. Having shown
\[
\Expect \left[\sum_{i=1}^n (\xi^TB_i(X)\xi)_+ \right] \geq n\rho~\vartheta_r\left(\frac{\psi(\rho)}{n\rho}\right),
\]
and using $\Expect[\xi^TX\xi]=\Tr(X)=1$, we get
\[
\Expect \left[\sum_{i=1}^n (\xi^TB_i(X)\xi)_+ \right]  \geq n\rho~ \vartheta_r\left(\frac{\psi(\rho)}{n\rho}\right)~\Expect[\xi^TX\xi],
\]
and this bound implies that there exists a nonzero $\xi$ such that
\[
\sum_{i=1}^n (\xi^TB_i(X)\xi)_+ \geq n\rho~ \vartheta_r\left(\frac{\psi(\rho)}{n\rho}\right)~ (\xi^TX\xi).
\]
Suppose we set
\[v_i=\left\{\BA{ll}
1 & \mbox{if } (\xi^TB_i(X)\xi) >0\\
0 & \mbox{otherwise, }\\
\EA\right. 
\]
we now have
\[
\xi^T \left( \sum_{i=1}^n v_iB_i(X) \right) \xi \geq n\rho~ \vartheta_r\left(\frac{\psi(\rho)}{n\rho}\right)~(\xi^TX\xi),
\]
which is also, with $z=X^{1/2}\xi$ and $B_i(X)=XB_iX$
\[
z^T \left( \sum_{i=1}^n v_iB_i \right) z \geq n\rho~ \vartheta_r\left(\frac{\psi(\rho)}{n\rho}\right) ~ z^Tz.
\]
This finally means that for our choice of $v$, with $B_i=a_i a_i^T-\rho$
\[
\phi(\rho)=\max_{v\in\{0,1\}^n}\lambdamax\left( \sum_{i=1}^n u_i a_ia_i^T\right)-\rho\Card(u)\geq\lambdamax\left( \sum_{i=1}^n v_i B_i\right) \geq n\rho~\vartheta_r\left(\frac{\psi(\rho)}{n\rho}\right),
\]
hence our lower bound $n\rho~ \vartheta_r(\psi(\rho)/n\rho) <\phi(\rho)$ (which holds whenever $X$ is feasible in~\eqref{eq:psi}, i.e. whenever $X\succeq 0$ with $\Tr X=1$). Furthermore, if $X$ is an optimal solution of the relaxation in~\eqref{eq:psi}, we also get an upper bound on $\phi(\rho)$, with
\[
n\rho~ \vartheta_r\left(\frac{\psi(\rho)}{n\rho}\right) \leq\phi(\rho)\leq \psi(\rho)
\]
which is the desired result.
\end{proof}

An explicit formula involving trigonometric integrals was derived in~\citep{El-G06} (note that our definition for the function $\vartheta(x)$ is slightly different here). When $r$ is large, we can approximate $\vartheta_r(\cdot)$ by the function
\BEQ\label{eq:vartheta}
\vartheta(x) \triangleq \Expect \left[\left(x \xi^2 - 1 \right)_+\right]
\EEQ
where $\xi\sim\mathcal{N}(0,1)$. Indeed, the central limit theorem shows that
\[
\frac{\sqrt{r}}{r-1}\sum_{j=2}^r (\xi_i^2-1)  \xrightarrow{\mathcal{L}} \mathcal{N}(0,1),
\]
when $r$ grows to infinity. By convexity, we also have $\vartheta(x) \leq \vartheta_r(x)$.
The function $\vartheta(\cdot)$ can be computed explicitly, with
\BEAS
\vartheta(x) &=& \Expect \left[\left(x \xi^2 - 1 \right)_+\right]\\
&=& 2 \int_{x^{-\frac{1}{2}}}^\infty x u^2 \frac{e^{-u^2/2}}{\sqrt{2 \pi}}du -2\mathcal{N}\left(-x^{-\frac{1}{2}}\right)\\
&=& \frac{2e^{-1/2x}}{\sqrt{2\pi x}} + 2(x-1) \mathcal{N}\left(-x^{-\frac{1}{2}}\right),
\EEAS
where $\mathcal{N}(\cdot)$ is the Gaussian cumulative density function. As with all results based on the central limit theorem, the approximation starts to be very good at relatively low values of $r$. The fact that $\vartheta'(0^+)=0$ means that we cannot obtain a constant approximation ratio (\`a la MAXCUT). However, because $\vartheta(x)$ is convex and increasing, we can still derive meaningful lower bounds in~\eqref{eq:rand-bounds} if we can bound $\psi(\rho)/n\rho$ from below by a given $c>0$, with
\BEQ\label{eq:approx-rat}
\frac{\vartheta(c)}{c} \psi(\rho) \leq n\rho~ \vartheta\left(\frac{\psi(\rho)}{n\rho}\right) \leq\phi(\rho), \quad\mbox{when } c\leq \frac{\psi(\rho)}{n\rho}.
\EEQ
We also observe that $\psi(0^+) = \lambdamax(\Sigma)>0$, and $\lim _{x\rightarrow \infty}\vartheta(x)/x = 1$ mean that, when $n$ is fixed, we have
\[
\frac{\vartheta\left({\psi(\rho)}/{n\rho}\right)}{{\psi(\rho)}/{n\rho}} \underset{\rho \rightarrow 0}{\longrightarrow} 1,
\]
i.e. the approximation ratio converges to one as $\rho$ goes to zero (and solutions get less sparse). In fact
\[
\vartheta(x)/x=1-\sqrt{\frac{2}{\pi}}\,x^{-1/2}- x^{-1} + O(x^{-3/2}),\quad \mbox{as }x\rightarrow \infty.
\]
We illustrate these last points by plotting $\vartheta(x)$ and $\vartheta(x)/x$ on the interval $[0,2]$ in Figure~\ref{fig:theta}, together with the function $\vartheta_r(x)$ for $r=5$.

\begin{figure}[h!]
\begin{center}
\begin{tabular}{ccc}
\psfrag{c}[t][b]{$x$}
\psfrag{theta}[b][t]{$\vartheta(x)$}
\psfragfig[width=.45\textwidth]{./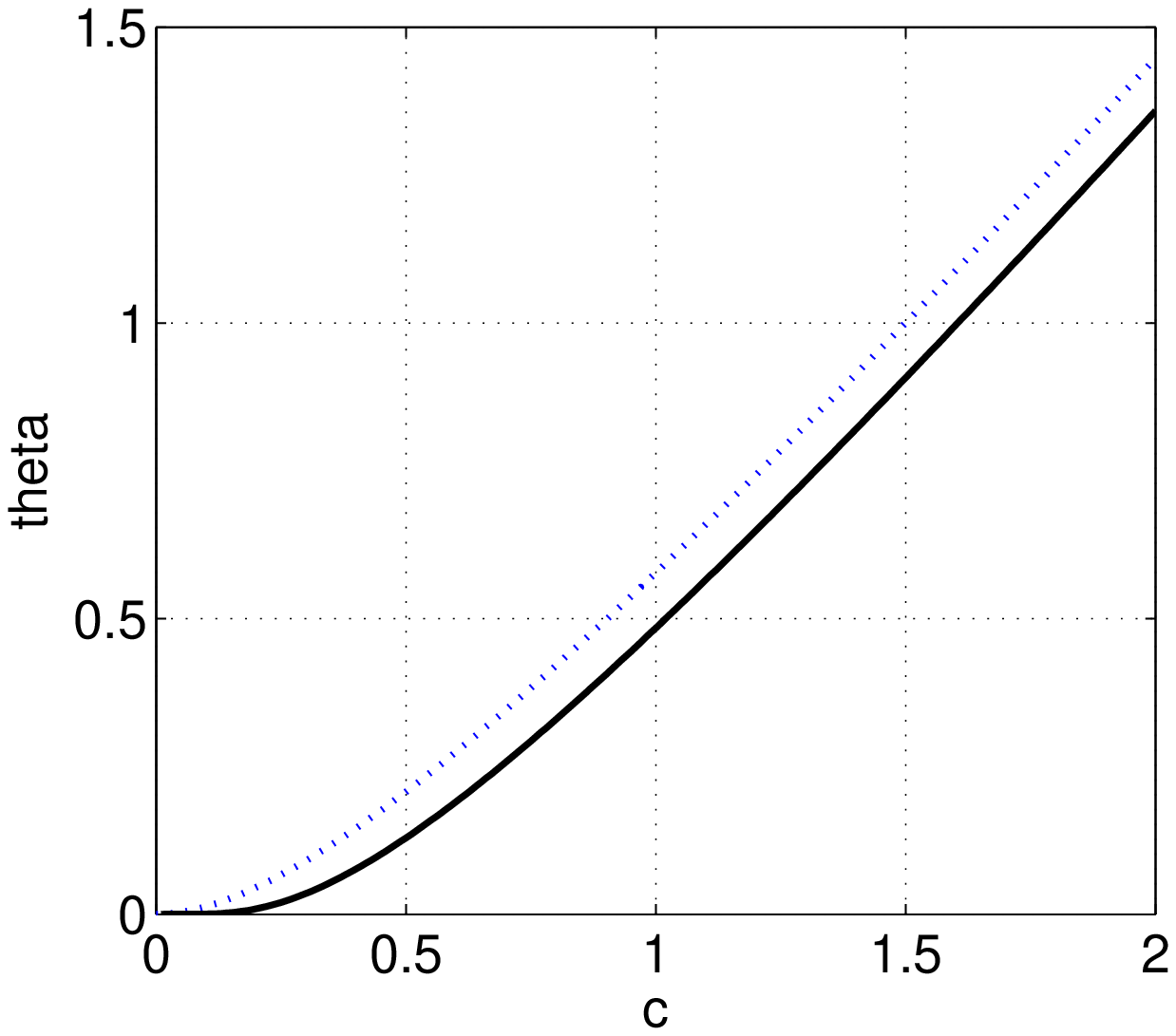}& &
\psfrag{c}[t][b]{$x$}
\psfrag{tcoverc}[b][t]{$\vartheta(x)/x$}
\psfragfig[width=.45\textwidth]{./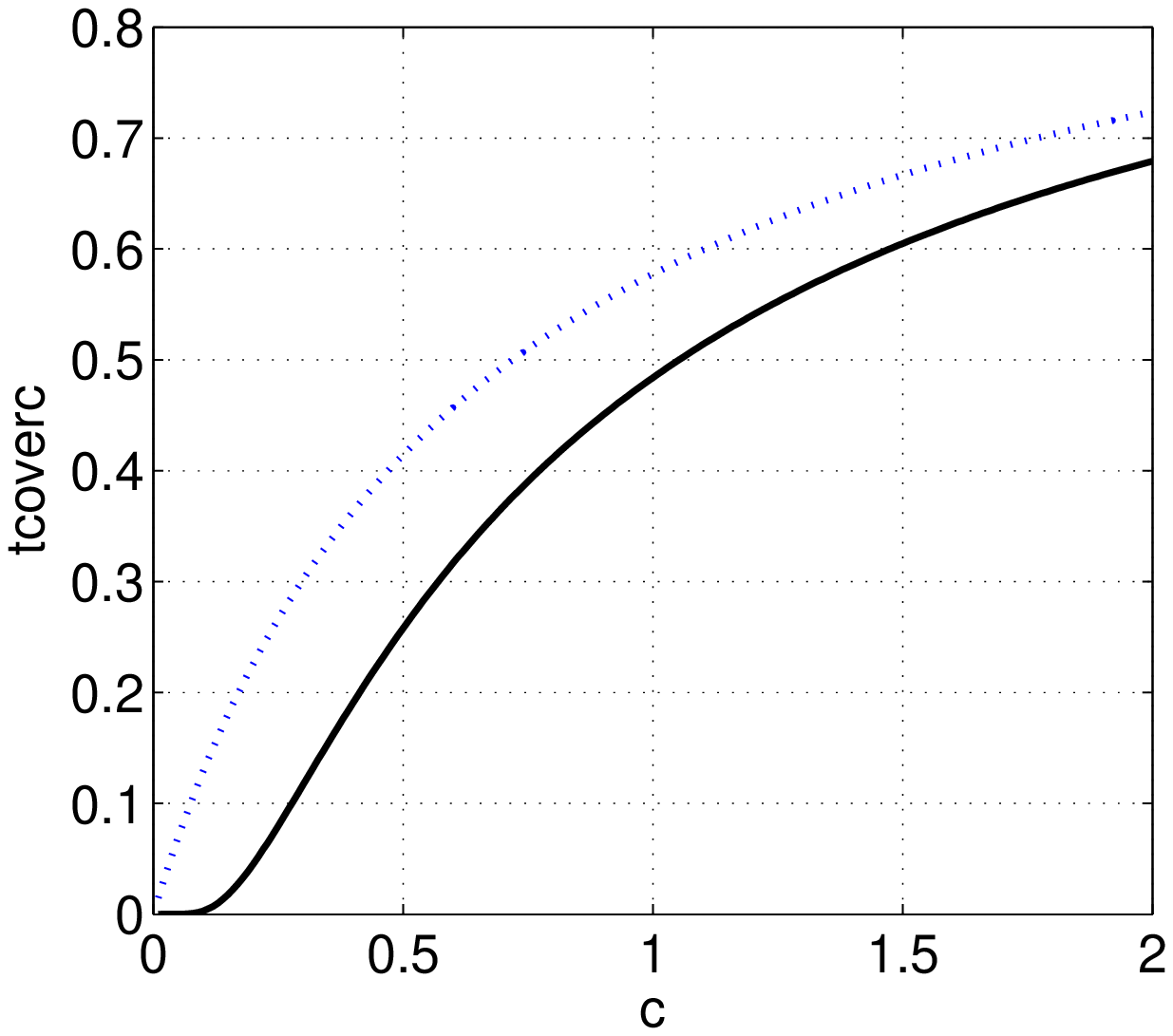}
\end{tabular}
\caption{{\em Left:} The functions $\vartheta(x)$ (solid black line), and $\vartheta_{5}(x)$ (dotted blue line).
{\em Right:} The functions $\vartheta(x)/x$ (solid black line), and $\vartheta_{5}(x)/x$ (dotted blue line). 
\label{fig:theta}}
\end{center}
\end{figure}

Remark also that a naive lower bound on $\psi(\rho)/n\rho$, hence on the approximation ratio, can be obtained by plugging $X=\idm/n$ in problem~\eqref{eq:psi}, which yields
\BEAS
\frac{\psi(\rho)}{n\rho} & \geq & \frac{\sum_{i=1}^n (\Sigma_{ii}-\rho)_+}{n^2\rho}\\
& = & \frac{\Tr(\Sigma)-\rho}{n^2\rho}
\EEAS
because we have assumed that $\rho \leq \min_i \Sigma_{ii}$. This shows for example that when 
\[
\rho \leq \frac{\Tr(\Sigma)}{n^2+1}
\]
then the approximation ration is greater than $\vartheta(1)$. We will see below that this range of values for $\rho$ is actually quite typical in detection problems where the matrix $\Sigma$ is Wishart.

\section{Detection problems} \label{s:detect}
In this section, we focus on the problem of detecting the presence of a sparse leading component in a Gaussian model. It was shown in \citep{Bert12} that the sparse eigenvalue statistic is minimax optimal in this setting. Computing sparse maximum eigenvalues is NP-Hard, but we show here that the relaxation detailed in the previous section achieve  detection rates that are a multiple of the minimax optimum, in a high-dimensional setting where the ambient dimension $n$, the number of samples $m$ and the sparsity level $k$ all grow towards infinity proportionally. More specifically, we focus on the following hypothesis testing problem, where
\BEQ\label{eq:hyp}
\left\{\BA{rl}
\mathcal{H}_0:&x \sim\mathcal{N}\left(0,\idm_n\right)\\
\mathcal{H}_1: &x \sim\mathcal{N}\left(0,\idm_n+\theta vv^T\right)
\EA\right.
\EEQ
where $\theta>0$ and $v\in\reals^n$ is a sparse vector satisfying $\Card(v)\leq k^*$ and $\|v\|_2=1$. Given $m$ sample  variables $x_i\in\reals^n$, we let $\Sht\in\symm_n$ be the sample covariance matrix, with
\[
\Sht=\frac{1}{m}\sum_{i=1}^m x_ix_i^T.
\]
We will now seek to bound the value of the statistics $\phi(\rho)$ and $\psi(\rho)$ defined in~\eqref{eq:sp-pen-eig} and~\eqref{eq:psi} respectively, under the two hypotheses above. 

\subsection{The optimal statistic $\phi(\rho)$}
We start by the easy part, namely bounding $\phi(\rho)$ from below under $\mathcal{H}_1$.

\begin{proposition}\label{prop:h1}
Given $\Sht\in\symm_n$ under $\mathcal{H}_1$, we have 
\BEQ\label{eq:lb-h1}
\phi(\rho) \geq 1 + \theta -\rho k^* -2(1+\theta)\sqrt{\frac{\log(1/\delta)}{m}}
\EEQ
with probability $1-\delta$.
\end{proposition}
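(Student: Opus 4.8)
The plan is to obtain the lower bound by evaluating the objective of~\eqref{eq:sp-pen-eig} at a single well-chosen feasible point, namely the planted direction $x=v$ itself. Since $\|v\|_2=1$ and $\Card(v)\leq k^*$, this $v$ is admissible, so that
\[
\phi(\rho) ~\geq~ v^T\Sht v - \rho\,\Card(v) ~\geq~ v^T\Sht v - \rho k^*,
\]
where the last inequality uses $\rho\geq 0$. It then remains to control $v^T\Sht v$ from below under $\mathcal{H}_1$.

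The second step is to identify the law of $v^T\Sht v$. Writing $\Sht=\frac1m\sum_{i=1}^m x_ix_i^T$ with $x_i\sim\mathcal{N}(0,\idm_n+\theta vv^T)$ i.i.d., each scalar $v^Tx_i$ is centered Gaussian with variance $v^T(\idm_n+\theta vv^T)v = 1+\theta$, using $\|v\|_2=1$. Hence $v^Tx_i=\sqrt{1+\theta}\,g_i$ with $g_i\sim\mathcal{N}(0,1)$ i.i.d., so that
\[
v^T\Sht v ~=~ (1+\theta)\,\frac1m\sum_{i=1}^m g_i^2 ~=~ (1+\theta)\,\frac{W}{m}, \qquad W\sim\chi^2_m .
\]
Applying a standard lower-tail bound for the chi-square distribution (the Laurent--Massart inequality $\Prob[W\leq m-2\sqrt{mt}]\leq e^{-t}$) with $t=\log(1/\delta)$ gives, with probability at least $1-\delta$,
\[
\frac{W}{m} ~\geq~ 1 - 2\sqrt{\frac{\log(1/\delta)}{m}} .
\]
Multiplying by $1+\theta$ and substituting into the first display yields
\[
\phi(\rho) ~\geq~ (1+\theta)\Big(1-2\sqrt{\tfrac{\log(1/\delta)}{m}}\Big) - \rho k^* ~=~ 1+\theta-\rho k^* - 2(1+\theta)\sqrt{\tfrac{\log(1/\delta)}{m}},
\]
which is exactly~\eqref{eq:lb-h1}.

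The only point requiring care is matching the constants in the statement: the factor $2$ in front of the square root and the clean $\log(1/\delta)$ dependence come out precisely from the sharp form of the chi-square lower-tail bound, whereas a cruder Chernoff- or Bernstein-type estimate would produce a worse constant or an extra lower-order term. Everything else — feasibility of $v$, the variance computation, and the reduction to a $\chi^2_m$ deviation — is elementary, so I do not expect any genuine obstacle beyond invoking the appropriate concentration inequality.
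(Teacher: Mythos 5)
Your proof is correct and takes essentially the same route as the paper: the paper lower-bounds $\phi(\rho)\geq\lambda_{\mathrm{max}}^{k^*}(\Sht)-\rho k^*$ and then invokes \citep[Prop.\,4.1]{Bert12}, whose proof is exactly your argument (evaluate at the planted unit vector $v$, observe that $v^T\Sht v$ is $(1+\theta)/m$ times a $\chi^2_m$ variable, and apply the Laurent--Massart lower-tail bound). You have simply made the cited concentration step explicit, which is fine.
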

\begin{proof}
By construction, we have $\phi(\rho) \geq \lambda^{k^*}_\mathrm{max}(\Sht) - \rho k^*$, and \citep[Prop.\,4.1]{Bert12} then yields the desired result.
\end{proof}

Using again the results in \citep{Bert12}, we now show an upper bound on the value of the statistic $\phi(\rho)$ under $\mathcal{H}_0$.

\begin{proposition}\label{prop:h0}
Given $\Sht\in\symm_n$ under $\mathcal{H}_0$, and assuming $\rho \geq \Delta/m$, where
\[
\Delta=4\log(9en/k^*)+4\log(1/\delta)
\] 
we have 
\BEQ\label{eq:ub-h0}
\phi(\rho) \leq 1 + \frac{4k^*\rho}{e\Delta} + \frac{1}{\rho m/\Delta -1}
\EEQ
with probability $1-2\delta$ when $\delta$ is small enough.
\end{proposition}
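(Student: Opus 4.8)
The plan is to reduce $\phi(\rho)$ to a penalized maximum of sparse eigenvalues of the Wishart matrix $\Sht$, insert a uniform large-deviation bound for those eigenvalues under $\mathcal{H}_0$, and then carry out a one-variable optimization over the sparsity level $k$. The conceptual content is modest; the real work is in the last step.

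First I would observe that, partitioning the maximization in \eqref{eq:sp-pen-eig} according to the value of $\Card(x)$,
\[
\phi(\rho) \;=\; \max_{1\le k\le n}\ \bigl\{\lambda_\mathrm{max}^k(\Sht)-\rho k\bigr\}.
\]
Under $\mathcal{H}_0$ the matrix $m\Sht$ is a white Wishart matrix, so it remains to bound $\lambda_\mathrm{max}^k(\Sht)$ from above, uniformly in $k$, and then to solve the penalized trade-off over $k$. For the first part I would invoke a large-deviation estimate from \citep{Bert12}: with probability at least $1-2\delta$ one has, simultaneously for every $k$,
\[
\lambda_\mathrm{max}^k(\Sht)\ \le\ 1 + 2\sqrt{\tfrac{k\Delta_k}{m}} + \tfrac{k\Delta_k}{m},\qquad \Delta_k \triangleq 4\log(9en/k)+4\log(1/\delta).
\]
The mechanism is the standard one: for each fixed support of size $k$ the corresponding block of $\Sht$ is a normalized $k\times k$ Wishart, whose top eigenvalue is controlled by a $\chi^2_m$-type tail estimate over a net of cardinality $\le 9^k$ of the unit sphere of $\reals^k$; a union bound over the $\binom{n}{k}\le(en/k)^k$ supports produces the $\log(9en/k)$ term, and a summable ($\sum_{k\ge1}k^{-2}<2$) union bound over $k=1,\dots,n$ produces the factor $2\delta$, the $\log(1/\delta)$ (rather than $\log(n/\delta)$), and the ``$\delta$ small enough'' proviso.

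Next I would plug this bound into $\phi(\rho)-1\le\max_{1\le k\le n}\{2\sqrt{k\Delta_k/m}+k\Delta_k/m-\rho k\}$ and optimize over $k$, treating small and large sparsity separately and using $\Delta_k=\Delta+4\log(k^*/k)$. For $k\ge k^*$ one has $\Delta_k\le\Delta$, so the bracket is at most $2\sqrt{k\Delta/m}+k\Delta/m-\rho k$; relaxing $k$ to $[0,\infty)$ and using the hypothesis $\rho>\Delta/m$, an AM--GM (equivalently, a one-line calculus) computation shows the maximum is attained at $k=\Delta m/(\rho m-\Delta)^2$ with value $1/(\rho m/\Delta-1)$, the last term of \eqref{eq:ub-h0}. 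For $k<k^*$ one uses $\log x\le x/e$ to get $k\Delta_k/m\le k\Delta/m+4k^*/(em)$; combined with $\Delta/m\le\rho$ this makes the linear part $k\Delta_k/m-\rho k$ at most $4k^*/(em)\le 4k^*\rho/(e\Delta)$, and the residual square-root contribution has to be absorbed into the same term. Taking the maximum of the two regimes' bounds and using $\max(a,b)\le a+b$ then gives \eqref{eq:ub-h0}.

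The main obstacle is exactly that last point: in the $k<k^*$ regime one must keep the square-root deviation term $2\sqrt{k\Delta_k/m}$ and the penalty $\rho k$ in balance so that the bracket really collapses into the clean $4k^*\rho/(e\Delta)$ rather than a cruder $\sqrt{k^*/m}$-type quantity — this is where the quantitative force of $\rho\geq\Delta/m$ (and, implicitly, the proportional high-dimensional scaling of $n,m,k^*$ discussed in the introduction) is spent. The other delicate but routine point is the net/union-bound bookkeeping that funnels all the exponents into the single $\Delta_k$ while keeping the failure probability at $2\delta$. Once the deviation bound of \citep{Bert12} is quoted, only the optimization over $k$ remains.
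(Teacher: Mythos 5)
Your overall route is the same as the paper's: decompose $\phi(\rho)=\max_{k}\{\lambda_\mathrm{max}^k(\Sht)-\rho k\}$, invoke the Wishart deviation bound of \citep[Prop.~4.2]{Bert12} with a union bound over supports and over $k$, and then optimize the resulting bracket over $k$ using $k\log(k^*/k)\le k^*/e$ and the quadratic-in-$\sqrt{k}$ structure; your treatment of the regime $k\ge k^*$ is correct and yields the term $1/(\rho m/\Delta-1)$ exactly as the paper does. The gap is precisely where you flag it, and it is a real one: for $k<k^*$, splitting the bracket into a ``linear part'' $k\Delta_k/m-\rho k$ and a ``residual square-root contribution'' $2\sqrt{k\Delta_k/m}$ cannot work. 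By bounding the linear part by $4k^*/(em)$ you spend the \emph{entire} penalty $-\rho k$ against $k\Delta_k/m$, leaving $2\sqrt{k\Delta_k/m}$ --- of order $\sqrt{k^*\Delta/m}$ near $k=k^*$ --- standing alone, and this is not dominated by $4k^*\rho/(e\Delta)+1/(\rho m/\Delta-1)$ in general (take $k=k^*=m$ and $\rho=2\Delta/m$ with $\Delta$ large: the full bracket is negative, but your square-root piece alone is $2\sqrt{\Delta}$). The square root and the penalty must be traded off jointly, not term by term.

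The missing step is a single substitution. Since $k\log(k^*/k)\le k^*/e$ for \emph{all} $k\ge1$ (it is negative for $k>k^*$), you have $k\Delta_k\le k\Delta+4k^*/e=k'\Delta$ with $k'\triangleq k+\tfrac{4k^*}{e\Delta}$, and since $t\mapsto 2\sqrt{t/m}+t/m$ is increasing,
\[
2\sqrt{k\Delta_k/m}+\frac{k\Delta_k}{m}-\rho k\ \le\ \Bigl(2\sqrt{k'\Delta/m}+\frac{k'\Delta}{m}-\rho k'\Bigr)+\rho(k'-k)\ \le\ \frac{1}{\rho m/\Delta-1}+\frac{4k^*\rho}{e\Delta},
\]
where the first bracket is the same function of $k'\in[0,\infty)$ that you already maximized, so no case split on $k$ is needed at all. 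This is exactly what the paper does in its tail-probability formulation: it keeps everything inside the convex function $(\sqrt{1+x}-1)^2-\alpha x$ with $x=u+\rho k$ and $\alpha=\Delta/(\rho m)<1$, whose minimum $-\alpha^2/(1-\alpha)$ produces the $1/(\rho m/\Delta-1)$ term, while the $k^*/e$ correction, divided by $\alpha$, produces $4k^*\rho/(e\Delta)$; setting $\alpha u=\alpha^2/(1-\alpha)+4k^*/(em)$ then gives the right-hand side of \eqref{eq:ub-h0}, and summing $\delta^k$ over $k$ gives the failure probability $\delta/(1-\delta)\le2\delta$. With that one absorption step supplied, your argument is the paper's.
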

\begin{proof}
Under $\mathcal{H}_0$, \citep[Prop.\,4.2]{Bert12} shows
\[
\Prob\left[\lambda^{k}_\mathrm{max}(\Sht) -\rho k \geq 1 -\rho k +4 \sqrt{t/m} +4t/m \right]\leq {n \choose k} 9^k e^{-t}
\]
and writing $1+u=1 -\rho k +4 v/\sqrt{m} +4v^2/m$, with $v=\sqrt{t}$, yields
\[
v=\frac{-\sqrt{m}+\sqrt{m(1+u+\rho k)}}{2}
\]
hence
\[
\Prob\left[\lambda^{k}_\mathrm{max}(\Sht) -\rho k \geq 1 + u \right]\leq {n \choose k} 9^k e^{-v^2}.
\]
Using the fact that $\phi(\rho)=\max_k \lambda^{k}_\mathrm{max}(\Sht) -\rho k$ and ${n \choose k} \leq \left(\frac{en}{k}\right)^k$ we then get, using union bounds
\[
\Prob\left[ \phi(\rho) \geq 1 + u \right]\leq \sum_{k=1}^{n} \exp\left(k\log\frac{9en}{k}-\frac{m}{4}(\sqrt{(1+u+\rho k)}-1)^2\right).
\]
We write
\[
k\log\frac{9en}{k}-\frac{m}{4}(\sqrt{(1+u+\rho k)}-1)^2= k\log\frac{9en}{k^*}-\frac{m}{4}(\sqrt{(1+u+\rho k)}-1)^2 + (\log k^*-\log k)k.
\]
When $\alpha<1$, the function 
\[
(\sqrt{1+x}-1)^2-\alpha x
\]
is convex and reaches its minimum when $x=1/(1-\alpha)^2-1$, with value $-\alpha^2/(1-\alpha)$. A similar argument shows that $(\log k^* -\log k)k \leq k^*/e$. Setting $x=u+\rho k$ we get
\[
\alpha u + \alpha \rho k - (\sqrt{1+u+\rho k}-1)^2 \leq \alpha^2/(1-\alpha)
\]
Setting $\alpha=\Delta/\rho m$ above, imposing $\rho > \Delta/m$, and
\[
\alpha u= \alpha^2/(1-\alpha) + \frac{4k^*}{em}
\]
we can ensure 
\[
\frac{m}{4}\left(\frac{\Delta}{m}k+ (\log k^*-\log k)\frac{4k}{m} -(\sqrt{(1+u+\rho k)}-1)^2 \right) \leq 0
\]
for all $k\geq 1$, hence
\[
k\log\frac{9ep}{k^*}-\frac{m}{4}(\sqrt{(1+u+\rho k)}-1)^2 + (\log k^*-\log k)k \leq -k \log(1/\delta)
\]
and
\[
\Prob\left[ \phi(\rho) \geq 1 + u \right] \leq \frac{\delta}{1-\delta}
\]
which yields the desired result.
\end{proof}

We now use these last two results to determine the minimum signal level $\theta$ which can be detected using the statistic $\phi(\rho)$. We define the following levels
\BEQ\label{eq:levels-phi}
\left\{\BA{l}
\tau_0=1 + \sqrt{\frac{k^*(\Delta+4/e)}{m}} + \frac{4k^*}{e m}+\frac{4}{e\Delta}\sqrt{\frac{k^*\Delta}{m(1+4/(e\Delta))}} \\ 
\tau_1=1 + \theta -\sqrt{\frac{k^*\Delta}{m(1+4/(e\Delta))}} - \frac{k^*\Delta}{m} -2(1+\theta)\sqrt{\frac{\log(1/\delta)}{m}}
\EA\right.\EEQ
for some $\gamma>0$. Given $\Sht\in\symm_n$ and $\tau_\phi \in [\tau_0,\tau_1]$, the corresponding test is given by
\BEQ\label{eq:test-phi}
\ones_{\{\phi(\rho)>\tau_\phi\}}
\EEQ
The following proposition shows that if $\theta$ is high enough, then this test discriminates between $\mathcal{H}_0$ and $\mathcal{H}_1$ with probability $1-3\delta$.

\begin{proposition}\label{prop:stat-phi}
Suppose we set
\BEQ\label{eq:rho}
\Delta=4\log(9en/k^*)+4\log(1/\delta)
\quad \mbox{and} \quad
\rho=\frac{\Delta}{m}+\frac{\Delta}{\sqrt{k^*m(\Delta+4/e)}}
\EEQ
and define $\theta_\phi$ such that
\BEQ\label{eq:theta}
\theta_\phi = \left( 2\sqrt{\frac{k^*(\Delta+4/e)}{m}} + \frac{k^*(\Delta+4/e)}{m}+ 2\sqrt{\frac{\log(1/\delta)}{m}} \right) \left(1-2\sqrt{\frac{\log(1/\delta)}{m}} \right)^{-1}
\EEQ
then if $\theta> \theta_\phi$ in the Gaussian model~\eqref{eq:hyp}, the test statistic~\eqref{eq:test-phi} based on $\phi(\rho)$  discriminates between $\mathcal{H}_0$ and~$\mathcal{H}_1$ with probability $1-3\delta$.
\end{proposition}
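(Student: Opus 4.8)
The plan is to reduce the detection claim to the single scalar inequality $\tau_0\le\tau_1$ between the two levels in~\eqref{eq:levels-phi}, and then to verify that, for the particular $\rho$ prescribed in~\eqref{eq:rho}, this inequality holds precisely when $\theta>\theta_\phi$ with $\theta_\phi$ as in~\eqref{eq:theta}. First I would substitute $\rho=\Delta/m+\Delta/\sqrt{k^*m(\Delta+4/e)}$ into the bound of Proposition~\ref{prop:h0}. A one-line computation gives $\rho m/\Delta-1=\sqrt{m/(k^*(\Delta+4/e))}$, hence
\[
\frac{1}{\rho m/\Delta-1}=\sqrt{\frac{k^*(\Delta+4/e)}{m}},\qquad \frac{4k^*\rho}{e\Delta}=\frac{4k^*}{em}+\frac{4}{e}\sqrt{\frac{k^*}{m(\Delta+4/e)}},
\]
so that the right-hand side of~\eqref{eq:ub-h0} is exactly $\tau_0$. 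Substituting the same $\rho$ into~\eqref{eq:lb-h1} and using $\rho k^*=\tfrac{k^*\Delta}{m}+\sqrt{\tfrac{k^*\Delta}{m(1+4/(e\Delta))}}$ identifies the right-hand side of~\eqref{eq:lb-h1} with $\tau_1$. Thus, on the intersection of the high-probability events of Propositions~\ref{prop:h0} and~\ref{prop:h1}, we have $\phi(\rho)\le\tau_0$ under $\mathcal{H}_0$ and $\phi(\rho)\ge\tau_1$ under $\mathcal{H}_1$.

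Next I would check $\tau_0\le\tau_1$. Writing this out and collecting the $\theta$ terms on the left, the inequality becomes
\[
\theta\Bigl(1-2\sqrt{\tfrac{\log(1/\delta)}{m}}\Bigr)\ \ge\ \sqrt{\tfrac{k^*(\Delta+4/e)}{m}}+\tfrac{4}{e}\sqrt{\tfrac{k^*}{m(\Delta+4/e)}}+\sqrt{\tfrac{k^*\Delta}{m(1+4/(e\Delta))}}+\tfrac{4k^*}{em}+\tfrac{k^*\Delta}{m}+2\sqrt{\tfrac{\log(1/\delta)}{m}}.
\]
Two elementary simplifications collapse the right-hand side. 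First, $\tfrac{4k^*}{em}+\tfrac{k^*\Delta}{m}=\tfrac{k^*(\Delta+4/e)}{m}$. Second, writing $\sqrt{\tfrac{k^*(\Delta+4/e)}{m}}=(\Delta+4/e)\sqrt{\tfrac{k^*}{m(\Delta+4/e)}}$ and $\sqrt{\tfrac{k^*\Delta}{m(1+4/(e\Delta))}}=\Delta\sqrt{\tfrac{k^*}{m(\Delta+4/e)}}$, the three square-root-in-$k^*$ terms sum to $\bigl((\Delta+4/e)+\tfrac{4}{e}+\Delta\bigr)\sqrt{\tfrac{k^*}{m(\Delta+4/e)}}=2\sqrt{\tfrac{k^*(\Delta+4/e)}{m}}$. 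Hence the right-hand side equals $2\sqrt{\tfrac{k^*(\Delta+4/e)}{m}}+\tfrac{k^*(\Delta+4/e)}{m}+2\sqrt{\tfrac{\log(1/\delta)}{m}}$, and dividing by $1-2\sqrt{\log(1/\delta)/m}$ — which is positive once $4\log(1/\delta)<m$, a regime consistent with the small-$\delta$ hypothesis of Proposition~\ref{prop:h0} — turns $\tau_0\le\tau_1$ into exactly $\theta\ge\theta_\phi$. So $\theta>\theta_\phi$ gives $\tau_0<\tau_1$ and the interval $[\tau_0,\tau_1]\ni\tau_\phi$ is nonempty.

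Finally I would assemble the probabilistic conclusion. Under $\mathcal{H}_0$, Proposition~\ref{prop:h0} gives $\phi(\rho)\le\tau_0\le\tau_\phi$ with probability $1-2\delta$, so the test~\eqref{eq:test-phi} returns $0$; under $\mathcal{H}_1$, Proposition~\ref{prop:h1} gives $\phi(\rho)\ge\tau_1>\tau_0$, hence $\phi(\rho)\ge\tau_\phi$ (taking, say, $\tau_\phi=\tau_0$), with probability $1-\delta$, so the test returns $1$. In each case the test is correct with probability at least $1-3\delta$ (in fact $1-2\delta$ under $\mathcal{H}_0$ and $1-\delta$ under $\mathcal{H}_1$), which is the claimed result. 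The only genuine obstacle is the bookkeeping in the middle step: one must notice that the $k^*$-square-root terms telescope to an \emph{exact} multiple of $\sqrt{k^*(\Delta+4/e)/m}$ rather than merely to a constant multiple of it — it is this exact cancellation that forces the specific value of $\rho$ in~\eqref{eq:rho} and makes $\theta_\phi$ come out in the closed form~\eqref{eq:theta}; everything else is a direct application of Propositions~\ref{prop:h0} and~\ref{prop:h1}.
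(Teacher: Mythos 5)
Your proof is correct and follows the same route as the paper's (very terse) argument: identify the bounds of Propositions~\ref{prop:h0} and~\ref{prop:h1} evaluated at the prescribed $\rho$ with $\tau_0$ and $\tau_1$, verify that $\tau_0\le\tau_1$ is algebraically equivalent to $\theta\ge\theta_\phi$, and conclude by a union bound. The paper simply asserts these steps without the computation, so your write-up is a correct, fully detailed version of the intended proof.
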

\begin{proof}
If $ \theta_\phi$ is set as in~\eqref{eq:theta}, setting $\rho$ as in~\eqref{eq:rho} means $\Delta/m\rho <1$, we have $\tau_0 \leq \tau_1$ and propositions~\ref{prop:h0} and~\ref{prop:h1} show that~\eqref{eq:test-phi} discriminates between $\mathcal{H}_0$ and $\mathcal{H}_1$.
\end{proof}

This detection level was shown to be minimax optimal in \citep{Bert12}. This is not surprising, since the statistic $\phi(\rho)$ is simply a penalized formulation of $\lambda_\mathrm{max}^k(\cdot)$ which was shown to reach a similar detection level in \citep{Bert12}. Both $\phi(\rho)$ and $\lambda_\mathrm{max}^k(\cdot)$ are intractable however, and we will now focus on an efficiently computable statistic based on $\psi(\rho)$.

\subsection{The tractable statistic $\psi(\rho)$}
We can directly use proposition~\ref{prop:h1} to produce a lower bound on $\psi(\rho)$ and on the approximation ratio under both $\mathcal{H}_0$ and $\mathcal{H}_1$.
\begin{corollary}\label{prop:approx}
Setting $\Delta$ as in~\eqref{eq:rho}, under both $\mathcal{H}_0$ and $\mathcal{H}_1$, we have
\[
\psi(\rho) \geq 1 -\rho k^*-2\sqrt{\frac{\log(1/\delta)}{m}}
\]
with probability $1-\delta$.
\end{corollary}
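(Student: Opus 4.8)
The plan is to reduce everything to Proposition~\ref{prop:h1} via the elementary fact that $\psi(\rho)$ dominates $\phi(\rho)$. First I would recall that the relaxation~\eqref{eq:psi} is obtained from the representation of $\phi(\rho)$ by dropping the rank-one constraint, so that $\phi(\rho)\le\psi(\rho)$ holds unconditionally (this is exactly the right-hand inequality in~\eqref{eq:rand-bounds}, valid whenever the optimal $X$ is feasible, which it always is). Consequently it is enough to establish the stated lower bound for $\phi(\rho)$ under each of $\mathcal{H}_0$ and $\mathcal{H}_1$ and then invoke $\psi(\rho)\ge\phi(\rho)$.

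Under $\mathcal{H}_0$ the samples are drawn from $\mathcal{N}(0,\idm_n)$, which is precisely the model of~\eqref{eq:hyp} under $\mathcal{H}_1$ with signal strength $\theta=0$. Applying Proposition~\ref{prop:h1} (equivalently \citep[Prop.\,4.1]{Bert12}) with $\theta=0$ therefore yields $\phi(\rho)\ge 1-\rho k^*-2\sqrt{\log(1/\delta)/m}$ with probability $1-\delta$, and combining with $\psi(\rho)\ge\phi(\rho)$ gives the corollary in this case.

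Under $\mathcal{H}_1$ with $\theta>0$, Proposition~\ref{prop:h1} gives, with probability $1-\delta$,
\[
\phi(\rho)\ \ge\ 1+\theta-\rho k^*-2(1+\theta)\sqrt{\frac{\log(1/\delta)}{m}}\ =\ \Big(1-\rho k^*-2\sqrt{\frac{\log(1/\delta)}{m}}\Big)+\theta\Big(1-2\sqrt{\frac{\log(1/\delta)}{m}}\Big),
\]
so the remaining point is simply that the last term is nonnegative, i.e.\ that $m\ge 4\log(1/\delta)$, which holds in the regime considered here (as in Proposition~\ref{prop:h0}, where $\delta$ is taken small enough relative to $m$); once more $\psi(\rho)\ge\phi(\rho)$ finishes the argument. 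The proof is thus essentially immediate: the only mild subtlety worth flagging is that the $\theta$-dependent correction $-2\theta\sqrt{\log(1/\delta)/m}$ in Proposition~\ref{prop:h1} must not swamp the $+\theta$ gain, which is why the uniform constant $1-\rho k^*-2\sqrt{\log(1/\delta)/m}$ (rather than the sharper $\mathcal{H}_1$ constant) is the natural bound to state across both hypotheses.
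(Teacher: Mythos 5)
Your proof is correct and follows exactly the paper's argument: apply Proposition~\ref{prop:h1} (with $\theta=0$ for $\mathcal{H}_0$) and use $\phi(\rho)\le\psi(\rho)$. You even make explicit the small point the paper glosses over, namely that under $\mathcal{H}_1$ the $\theta$-dependent bound dominates the stated one precisely when $1-2\sqrt{\log(1/\delta)/m}\ge 0$, which holds in the regime considered.
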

\begin{proof}
We simply set $\theta=0$ in proposition~\eqref{prop:h1} and use the fact that $\phi(\rho)\leq \psi(\rho)$ by construction.
\end{proof}

We are now ready to prove the main result of this section, showing that in a high dimensional setting, the {\em tractable} statistic $\psi(\rho)$ discriminates between $\mathcal{H}_0$ and $\mathcal{H}_1$ when  $\theta\geq\theta_\psi$, where $\theta_\psi$ is comparable to $\theta_\phi$, and $\theta_\psi$ is {\em independent of $n$}. The approximation ratio in~\eqref{eq:approx-rat} is controlled by $n\rho/\psi(\rho)$ which depends on $\rho$ so we cannot explicitly minimize the detection level in $\rho$ as we did above. Instead, we will control the quality of the approximation of $\phi(\rho)$ by $\psi(\rho)$ for the value of $\rho$ used in computing $\theta_\phi$. We suppose $n=\mu m$ and $k^*=\kappa n$, where $\mu>0$ and $\kappa \in (0,1)$. Setting $\rho$ as in~\eqref{eq:rho}, we get
\[
n\rho = \mu\Delta+\frac{\mu\Delta}{\sqrt{\kappa(\Delta+4/e)}}\\
\]
with Corollary~\ref{prop:approx} implying
\[
\psi(\rho) \geq 1 -  \mu\Delta\kappa - \frac{\sqrt{{\mu\kappa}}}{\sqrt{(\Delta+4/e)}} - 2\sqrt{\frac{\log(1/\delta)}{m}}.
\]
This means that the approximation ratio in~\eqref{eq:approx-rat} is bounded below by $\beta(\mu,\kappa)$, with
\BEQ\label{eq:approx-ratio}
\beta(\mu,\kappa)=\frac{\vartheta(c)}{c} 
\quad \mbox{where} \quad
c=\frac{1 -  \mu\Delta\kappa - \frac{\sqrt{{\mu\kappa}}}{\sqrt{(\Delta+4/e)}} - 2\sqrt{\frac{\log(1/\delta)}{m}}}{{\mu\Delta}+\frac{\mu\Delta}{\sqrt{\kappa(\Delta+4/e)}}}.
\EEQ
Given $\Sht\in\symm_n$ and $\tau_\psi \in [\beta(\mu,\kappa)^{-1} \tau_0,\tau_1]$, where $\tau_0$ and $\tau_1$ are defined in~\eqref{eq:levels-phi}, the corresponding test is then
\BEQ\label{eq:test-psi}
\ones_{\{\psi(\rho)>\tau_\psi\}}
\EEQ
with $\rho$ set as in~\eqref{eq:rho}. The following proposition shows that if $\theta$ is high enough, then this test discriminates between $\mathcal{H}_0$ and $\mathcal{H}_1$ with probability $1-3\delta$.

\begin{theorem}\label{thm:test}
suppose $n=\mu m$ and $k^*=\kappa n$, where $\mu>0$ and $\kappa \in (0,1)$ are fixed and $n$ is large. Define the detection threshold $\theta_\psi$ such that
\BEQ\label{eq:theta-psi-lg}
\theta_\psi \geq \beta(\mu,\kappa)^{-1} \theta_\phi
\EEQ
where $\beta(\mu,\kappa)$ is defined in~\eqref{eq:approx-ratio} and $\theta_\phi$ is defined in~\eqref{eq:theta}, then if $\theta> \theta_\psi$ in the Gaussian model~\eqref{eq:hyp} the test statistic~\eqref{eq:test-psi} based on $\psi(\rho)$ discriminates between $\mathcal{H}_0$ and~$\mathcal{H}_1$ with probability $1-3\delta$.
\end{theorem}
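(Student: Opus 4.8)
The plan is to assemble the theorem from the three ingredients already in hand: the lower bound on $\psi(\rho)$ under $\mathcal H_1$ (Proposition~\ref{prop:h1} combined with $\phi(\rho)\le\psi(\rho)$), the upper bound on $\psi(\rho)$ under $\mathcal H_0$, and the approximation-ratio lower bound $\beta(\mu,\kappa)$ from~\eqref{eq:approx-ratio}. Concretely, under $\mathcal H_1$ Proposition~\ref{prop:h1} gives $\psi(\rho)\ge\phi(\rho)\ge\tau_1$ with probability $1-\delta$, where $\tau_1$ is the quantity in~\eqref{eq:levels-phi}. Under $\mathcal H_0$, Proposition~\ref{prop:h0} gives $\phi(\rho)\le\tau_0$ with probability $1-2\delta$; applying the approximation inequality~\eqref{eq:approx-rat} in the form $\frac{\vartheta(c)}{c}\,\psi(\rho)\le\phi(\rho)$, which is valid because Corollary~\ref{prop:approx} certifies $c\le\psi(\rho)/(n\rho)$ with probability $1-\delta$, we deduce $\psi(\rho)\le\beta(\mu,\kappa)^{-1}\tau_0$ with probability $1-3\delta$. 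Since the test threshold $\tau_\psi$ is chosen in the interval $[\beta(\mu,\kappa)^{-1}\tau_0,\tau_1]$, the two events are separated and the indicator $\ones_{\{\psi(\rho)>\tau_\psi\}}$ correctly distinguishes the hypotheses, provided that interval is nonempty.

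So the real content reduces to checking $\beta(\mu,\kappa)^{-1}\tau_0\le\tau_1$, and I would show this is exactly equivalent to the stated condition $\theta>\theta_\psi:=\beta(\mu,\kappa)^{-1}\theta_\phi$. The idea is: from Proposition~\ref{prop:stat-phi} (and its proof) we already know $\tau_0\le\tau_1$ is equivalent to $\theta\ge\theta_\phi$ when $\rho$ is set as in~\eqref{eq:rho}; in fact one can read off from~\eqref{eq:levels-phi} that $\tau_1-\tau_0$ is affine in $\theta$ and $\tau_1-\tau_0\ge0\iff\theta\ge\theta_\phi$, with $\tau_0=1+(\theta_\phi$-independent terms$)$ and $\tau_1=1+\theta-(\cdots)$. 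Writing $\tau_0=1+s_0$ and $\tau_1=1+\theta-s_1$ with $s_0,s_1\ge0$ not depending on $\theta$, the requirement $\beta^{-1}(1+s_0)\le 1+\theta-s_1$ rearranges to $\theta\ge\beta^{-1}(1+s_0)-1+s_1$, and one checks this right-hand side is at most $\beta^{-1}\theta_\phi$ using $\beta\le1$ (so $\beta^{-1}\ge1$, hence $\beta^{-1}(1+s_0)-1\le\beta^{-1}s_0$) together with $\theta_\phi=s_0+s_1$-type identity that holds at equality $\theta_\phi$. I would simply verify the algebraic identity $\beta^{-1}\tau_0\le\tau_1$ holds for all $\theta\ge\beta^{-1}\theta_\phi$ by substituting the closed forms and using $\vartheta(c)/c\in(0,1]$.

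The last point to nail down is that everything is consistent for $n$ large: one must confirm that with $n=\mu m$, $k^*=\kappa n$, the quantity $c$ in~\eqref{eq:approx-ratio} stays strictly positive (so that $\beta(\mu,\kappa)>0$ and $\beta^{-1}$ is finite) and that the side conditions of Propositions~\ref{prop:h0} and~\ref{prop:h1}—in particular $\rho\ge\Delta/m$ and $\Delta/(m\rho)<1$, and "$\delta$ small enough"—remain in force; all of these were already arranged in Proposition~\ref{prop:stat-phi} for the chosen $\rho$, and I would note that $\Delta=4\log(9e/\kappa)+4\log(1/\delta)$ is now a constant independent of $n$, which is precisely why the threshold $\theta_\psi$ is independent of $n$. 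A union bound over the three events (the $\mathcal H_1$ lower bound, the $\mathcal H_0$ upper bound, and the Corollary~\ref{prop:approx} event used to invoke~\eqref{eq:approx-rat}) gives the claimed probability $1-3\delta$. The main obstacle is the bookkeeping in the second paragraph: making sure the rearrangement $\beta^{-1}\tau_0\le\tau_1$ genuinely follows from $\theta\ge\beta^{-1}\theta_\phi$ and not merely from some larger threshold, i.e.\ that the $n$-independent slack terms $s_0,s_1$ combine the way~\eqref{eq:theta} and~\eqref{eq:levels-phi} suggest; everything else is a direct citation of earlier results.
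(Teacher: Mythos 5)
Your overall assembly is the one the paper intends (its own proof is a one-line citation of Propositions~\ref{prop:h0}, \ref{prop:h1}, Corollary~\ref{prop:approx} and the bound~\eqref{eq:approx-rat}): under $\mathcal{H}_1$ you get $\psi(\rho)\geq\phi(\rho)\geq\tau_1$, under $\mathcal{H}_0$ you combine $\phi(\rho)\leq\tau_0$ with $\beta(\mu,\kappa)\,\psi(\rho)\leq\phi(\rho)$ (the latter certified by Corollary~\ref{prop:approx}), and a union bound gives $1-3\delta$. You also correctly isolate the one step that carries real content: showing that $\theta>\beta^{-1}\theta_\phi$ forces $\beta^{-1}\tau_0\leq\tau_1$, so that the threshold interval is nonempty.

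That is exactly where your argument breaks. Writing $\tau_0=1+s_0$, $\tau_1=1+\theta-s_1-\ell(1+\theta)$ with $\ell=2\sqrt{\log(1/\delta)/m}$, the separation condition $\beta^{-1}(1+s_0)\leq\tau_1$ rearranges to
\[
\theta \;\geq\; \frac{(\beta^{-1}-1)+\beta^{-1}s_0+s_1+\ell}{1-\ell},
\qquad\text{whereas}\qquad
\beta^{-1}\theta_\phi=\frac{\beta^{-1}(s_0+s_1+\ell)}{1-\ell},
\]
using the identity $s_0+s_1=2\sqrt{k^*(\Delta+4/e)/m}+k^*(\Delta+4/e)/m$ that makes $\tau_0\leq\tau_1\iff\theta\geq\theta_\phi$. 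The difference between the required threshold and $\beta^{-1}\theta_\phi$ is $(\beta^{-1}-1)(1-s_1-\ell)/(1-\ell)$, which is strictly positive whenever $\beta<1$ and $s_1+\ell<1$. Your justification, ``$\beta^{-1}\geq1$, hence $\beta^{-1}(1+s_0)-1\leq\beta^{-1}s_0$,'' has the inequality backwards: $\beta^{-1}(1+s_0)-1=(\beta^{-1}-1)+\beta^{-1}s_0\geq\beta^{-1}s_0$, with equality only if $\beta=1$; and since $\vartheta(x)\leq x$ we always have $\beta=\vartheta(c)/c\leq1$, typically strictly. The source of the extra term is that multiplying $\tau_0$ by $\beta^{-1}$ inflates the baseline ``$1$'' in $\tau_0$, not just the deviation $s_0$, and this contributes an additive $\beta^{-1}-1$ to the threshold that $\beta^{-1}\theta_\phi$ does not absorb. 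To close the gap you must either enlarge $\theta_\psi$ by this additive term (which is what a correct bookkeeping of~\eqref{eq:levels-phi} and~\eqref{eq:theta} yields), or argue in your specific parameter regime that $s_1+\ell\geq1$; you cannot deduce the stated threshold from the algebra as you have written it.
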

\begin{proof}
Having bounded the approximation ratio $\beta(\mu,\kappa)$ defined in~\eqref{eq:approx-ratio}, the result follows from~\eqref{eq:approx-rat} and Proposition~\ref{prop:stat-phi}.
\end{proof}

In Figure~\ref{fig:beta}, we plot the level sets of $\beta(\mu,\kappa)$ for $\Delta=5$. Observe that whenever $\mu$ is small enough, $\beta(\mu,\kappa)>0$ for all values of $\kappa\in(0,1)$ and the approximation ratio converges to one as $\mu$ goes to zero. This means that the detection threshold $\theta$ of the statistic $\psi(\rho)$ remains finite when $n$ goes to infinity in the proportional regime. By contrast, the detection threshold of the MDP statistic in \citep{Bert12} blows up to infinity as soon when $k$ goes to infinity in this scenario.

\begin{figure}[h!]
\begin{center}
\psfrag{mu}[t][b]{$1/\mu$}
\psfrag{kappa}[b][t]{$\kappa$}
\psfragfig[width=.5\textwidth]{./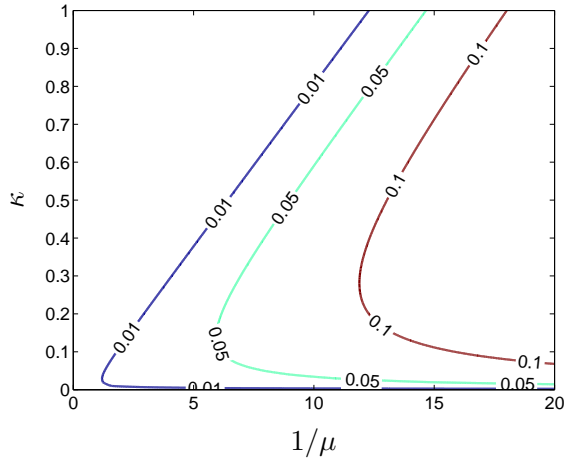}
\caption{Level sets of $\beta(\mu,\kappa)$ for $\Delta=5$.\label{fig:beta}}
\end{center}
\end{figure}

\subsection{Detection thresholds for $\psi(\rho)$ and $\lambdamax(\cdot)$}
The fact that $\psi(0)=\lambdamax(\Sigma)$ means the statistic $\psi(\rho)$ should always perform better than $\lambdamax(\cdot)$ for at least some values of $\rho$. However, we notice in Figure~\ref{fig:beta} that $\beta(\mu,\kappa)$ goes to zero as $\kappa$ goes to zero, which is a direct consequence of our choice of $\rho$ in~\eqref{eq:rho}. Our choice of $\rho$ is optimal for the statistic $\phi(\rho)$ but not for $\psi(\rho)$ and the main issue here is that we cannot explicitly maximize the difference $\tau_1-\beta(\mu,\kappa)^{-1} \tau_0$ as a function of $\rho$. On the other hand, it is easy to show that a better guess for $\rho$, when both $\kappa$ and $\mu$ are small, is to pick
\BEQ\label{eq:rho-small}
\rho=\frac{1}{n},
\EEQ
in which case, one can show that the detection threshold for $\theta_\phi$ becomes
\BEQ\label{eq:theta-smallk}
\theta_\phi = \left( \left(1+\frac{4}{e\Delta}\right)\kappa + \frac{\mu\Delta}{1-\mu\Delta}+ 2\sqrt{\frac{\log(1/\delta)}{m}} \right) \left(1-2\sqrt{\frac{\log(1/\delta)}{m}} \right)^{-1}
\EEQ
while the approximation ratio $\beta(\mu,\kappa)$ is given by $\vartheta(\psi(1/n))$ which is of order one. This means that the detection threshold for $\psi(\rho)$ is controlled by 
\[
\left(1+\frac{4}{e\Delta}\right)\kappa +\frac{\mu\Delta}{1-\mu\Delta} \simeq \left(1+\frac{4}{e\Delta}\right)\kappa + \mu\Delta
\]
when both $\kappa$ and $\mu\Delta$ are small.  On the other hand, \citep{Bena11} show that, in our regime, the statistic $\lambdamax(\cdot)$ can only distinguish between $\mathcal{H}_0$ and $\mathcal{H}_1$ when $\theta$ is larger than
\[
\sqrt{\mu}+\mu,
\]
which means that even for our suboptimal choice of $\rho$, the statistic $\psi(\rho)$ outperforms $\lambda(\cdot)$ by a factor $\Delta\sqrt{\mu}$.



\section{Algorithms}\label{s:algos}
The approximation performance we studied in the previous section comes at a price. While the semidefinite program~\eqref{eq:psi} is tractable, its complexity is significantly higher than that of the simpler relaxations derived in \citep{dasp04a}, and {\em very} significantly higher than the MDP statistic in \citep{Bert12} for example. \cite{dAsp08b} derived greedy algorithms to produce good upper bounds on $\psi(\rho)$ from approximate dual solutions to problem~\eqref{eq:psi}, but there are of course no guarantees on the quality of their output. In this section, we describe a simple algorithm to compute $\psi(\rho)$, with comparatively low storage and iteration costs.

Recall from \citep{dAsp08b} that the dual to problem~\eqref{eq:psi} is written
\BEQ\label{eq:psi-dual}
\BA{ll}
\mbox{minimize} & \lambdamax\left(\sum_{i=1}^n Y_i\right)\\
\mbox{subject to} & Y_i \succeq a_ia_i^T -\rho \idm\\
& Y_i \succeq 0,\quad i=1,\ldots,n
\EA\EEQ
in the variables $Y_i\in\symm_n$, where $\rho>0$ and $a_i\in\reals^n$ are defined in Section~\ref{s:relax}. We first show how to regularize this problem, then discuss how to solve the regularized instance using a Frank-Wolfe type algorithm.

\subsection{Smoothing}
Problem~\eqref{eq:psi-dual} is not smooth but we can write 
\[
\lambdamax(Y)=\max_{\Tr(X)=1,\,X\succeq 0}~ \Tr(YX)
\]
and as in \citep{BenT04}, a natural way to regularize $\lambdamax(\cdot)$ is to add a (strongly convex) matrix entropy penalty to this variational formulation. We also  add an explicit lower bound on the eigenvalues of $X$ to ensure that the gradient matrix $X$ is invertible and well conditioned. We summarize this in the next lemma.

\begin{lemma}\label{lem:smoothing}
Let $\epsilon>0$, the function
\BEQ\label{eq:f-smooth}
f(Y) \triangleq~ \max_{\substack{\Tr(X)=1,\\X\succeq (\epsilon/n) \idm}}~ \Tr(YX) - \frac{\epsilon}{\log n} (\Tr(X\log(X)) + \log n)
\EEQ
has a Lipschitz continuous gradient with constant
\[
L_f \leq \frac{\log n}{\epsilon}
\]
with respect to the trace norm and satisfies
\[
(1-\epsilon)\lambdamax(Y) -\epsilon \leq f(Y) \leq \lambdamax(Y).
\] 
for all $Y\in\symm_n$.
\end{lemma}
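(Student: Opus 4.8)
The plan is to read Lemma~\ref{lem:smoothing} as an instance of Nesterov's smoothing of $\lambdamax(\cdot)=\max_{\Tr X=1,\,X\succeq 0}\Tr(YX)$ using the negative von Neumann entropy as prox-function. Set $d_0(X)=\Tr(X\log X)+\log n$, $\mu=\epsilon/\log n$ and $Q_\epsilon=\{X\in\symm_n:\Tr X=1,\ X\succeq(\epsilon/n)\idm\}$, so that $f(Y)=\max_{X\in Q_\epsilon}\{\Tr(YX)-\mu\,d_0(X)\}$. The set $Q_\epsilon$ is compact and the objective is strictly concave (the entropy is strictly convex), so the maximizer $X^\star(Y)$ is unique and, by Danskin's theorem, $\nabla f(Y)=X^\star(Y)$. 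The constraint $X\succeq(\epsilon/n)\idm$ keeps $X^\star(Y)$ in the interior of the PSD cone, which is what makes $\log X^\star(Y)$ — hence the first-order conditions used below — well defined, besides yielding the well-conditioned gradient mentioned after the lemma.

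For the two-sided approximation I would only use that $0\le d_0\le\log n$ on $Q_\epsilon$: the lower bound holds because $\Tr(X\log X)\ge-\log n$ on the spectahedron (minimum at $\idm/n$), the upper bound because every eigenvalue of $X$ lies in $(0,1]$, so $\Tr(X\log X)\le 0$. Nonnegativity of $d_0$ together with $Q_\epsilon\subseteq\{\Tr X=1,X\succeq0\}$ gives $f(Y)\le\lambdamax(Y)$. For the reverse, evaluate the objective at the feasible point $\tilde X=(1-\epsilon)uu^T+(\epsilon/n)\idm\in Q_\epsilon$ with $u$ a leading eigenvector of $Y$; since $\mu\,d_0(\tilde X)\le\mu\log n=\epsilon$ this gives $f(Y)\ge(1-\epsilon)\lambdamax(Y)+\tfrac{\epsilon}{n}\Tr(Y)-\epsilon$, which is exactly the stated bound when $Y\succeq0$ (the case arising in~\eqref{eq:psi-dual}, where $Y=\sum_iY_i$ with $Y_i\succeq0$) and otherwise only drops the term $\tfrac{\epsilon}{n}\Tr(Y)\ge\epsilon\lambda_{\min}(Y)$.

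The Lipschitz estimate is the real content, and its one non-routine ingredient is the strong convexity of the matrix entropy: on the spectahedron the von Neumann entropy is $1$-strongly convex with respect to the trace norm, equivalently its Bregman divergence is the quantum relative entropy, which by the noncommutative Pinsker inequality satisfies $\Tr X(\log X-\log X')\ge\tfrac12\|X-X'\|_1^2$. Granting this (restricting the domain to $Q_\epsilon$ preserves strong convexity), the rest is the textbook monotone-operator argument: adding the first-order optimality inequalities $\langle Y-\mu\nabla d_0(X^\star(Y)),\,X-X^\star(Y)\rangle\le 0$ evaluated at $X=X^\star(Y')$ and the symmetric one at $Y'$ gives $\langle Y-Y',\,X^\star(Y)-X^\star(Y')\rangle\ge\mu\,\|X^\star(Y)-X^\star(Y')\|_1^2$, and Hölder's inequality for the trace/operator-norm dual pair then yields $\|X^\star(Y)-X^\star(Y')\|_1\le(1/\mu)\|Y-Y'\|_\infty=(\log n/\epsilon)\|Y-Y'\|_\infty$, i.e. $L_f\le\log n/\epsilon$. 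The main obstacle is therefore establishing the strong-convexity (noncommutative Pinsker) bound; this is standard — it is the matrix analogue of the $\ell_1$-strong convexity of negative entropy on the simplex and can be invoked directly or derived from the scalar case by a pinching argument — but it is the only step that is not pure bookkeeping.
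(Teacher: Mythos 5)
Your proof is correct and follows essentially the same route as the paper, which simply cites \citet{BenT04} for the strong convexity of the matrix entropy and \citet{Nest03} for the resulting Lipschitz bound; you have merely opened those black boxes (quantum Pinsker for the $1$-strong convexity in trace norm, the monotone-operator/H\"older argument for $L_f\leq \log n/\epsilon$, and the evaluation at $\tilde X=(1-\epsilon)uu^T+(\epsilon/n)\idm$ for the lower bound). Your observation that the inequality $(1-\epsilon)\lambdamax(Y)-\epsilon\leq f(Y)$ as stated needs $\Tr(Y)\geq 0$ (which holds in the application, where $Y=\sum_i Y_i$ with $Y_i\succeq 0$) is a valid and worthwhile refinement of a point the paper's proof glosses over.
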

\begin{proof}
In the spectahedron setting, we know from \citep{BenT04} that the matrix entropy function
\[
d(X)=\Tr(X\log(X)) + \log n
\]
is strongly convex with parameter $1/2$ with respect to the trace norm (the dual of the spectral norm) and satisfies 
\[
\max_{\Tr(X)=1,\,X\succeq 0}~d(X) \leq \log n.
\]
Then, \citep[Thm.\,1]{Nest03} shows that $\nabla f(Y)$ is Lipschitz continuous with constant $\log n/\epsilon$ with respect to the trace norm. By construction, we have $f(Y) \leq \lambdamax(Y)$ and 
\[
(1-\epsilon) \lambdamax(Y) - \epsilon \leq \max_{\substack{\Tr(X)=1,\\X\succeq (\epsilon/n) \idm}}~ \Tr(YX) -\epsilon \leq f(Y)
\]
hence the desired result.
\end{proof}

\begin{algorithm}[hb]
\caption{Frank-Wolfe algorithm for computing $\psi(\rho)$.} 
\label{alg:fw} 
\begin{algorithmic} [1]
\REQUIRE $\rho>0$ and a feasible starting point $Z_0$. 
\FOR{$k=1$ to $N_{max}$} 
\STATE Compute $X=\nabla f(Z)$, together with $X^{-1}$ and $X^{1/2}$.
\STATE Solve the $n$ subproblems
\BEQ\label{eq:aff-fw}
\BA{ll}
\mbox{minimize} & \Tr(Y_iX)\\
\mbox{subject to} & Y_i \succeq a_ia_i^T -\rho \idm\\
& Y_i \succeq 0,
\EA\EEQ
in the variables $Y_i\in\symm_n$ for $i=1,\ldots,n$.
\STATE Compute $W=\sum_{i=1}^n Y_i$.
\STATE Update the current point, with
\[
Z_k=\left(1-\frac{2}{k+2}\right) Z_{k-1} + \frac{2}{k+2} W,
\]
\ENDFOR
\ENSURE A matrix $Z\in\symm_n$.
\end{algorithmic} 
\end{algorithm} 

We can thus form a smooth approximation of problem~\eqref{eq:psi-dual}, written
\BEQ\label{eq:f-prob}
\BA{ll}
\mbox{minimize} & f\left(\sum_{i=1}^n Y_i\right)\\
\mbox{subject to} & Y_i \succeq a_ia_i^T -\rho \idm\\
& Y_i \succeq 0,\quad i=1,\ldots,n
\EA\EEQ
in the variables $Y_i\in\symm_n$, where $f$ is the  smooth approximation of the function $\lambdamax(\cdot)$ defined in~\eqref{eq:f-smooth} and solve this problem using Algorithm~\ref{alg:fw}. 

\subsection{Complexity} We first show how to efficiently compute both $f(Z)$ and  $\nabla f(Z)$. 
\begin{lemma}
Assume $Z=V\diag(y)V^T$ and suppose $\lambda\in\reals$ solves
\BEQ\label{eq:dual-grad}
\min_\lambda ~ \sum_{i=1}^n \min \left\{\frac{\epsilon(y_i+\lambda)}{n}- \frac{\beta\epsilon}{n}\log\frac{\epsilon}{n}, \beta e^{\frac{y_i+\lambda}{\beta}-1}
\right\}
\EEQ
then $Y=V\diag(x)V^T$ solves the maximization problem in~\eqref{eq:f-smooth}, where
\[
x_i=\max\left\{\frac{\epsilon}{n},e^{\frac{y_i+\lambda}{\beta}-1}\right\}, \quad i=1,\ldots,n
\]
and we have $\nabla f(Z)=Y$.
\end{lemma}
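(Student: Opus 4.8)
The plan is to reduce the matrix optimization defining $f$ in~\eqref{eq:f-smooth} to a separable scalar problem over eigenvalues, solve that scalar problem by Lagrangian duality in the multiplier $\lambda$ attached to the trace constraint, and recover $\nabla f(Z)$ from the unique maximizer by an envelope argument.

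First I would show it suffices to optimize over matrices sharing the eigenbasis $V$ of $Z$. In~\eqref{eq:f-smooth} the entropy term equals $\sum_i\lambda_i(X)\log\lambda_i(X)$ and the constraints $\Tr(X)=1$, $X\succeq(\epsilon/n)\idm$ depend on $X$ only through its spectrum, while von Neumann's trace inequality gives $\Tr(ZX)\le\sum_i\lambda_i^{\downarrow}(Z)\lambda_i^{\downarrow}(X)$, with the bound attained by the matrix $V\diag(\cdot)V^T$ carrying the spectrum of $X$ in decreasing order. Hence any feasible $X$ may be replaced by a feasible $V\diag(x)V^T$ without decreasing the objective, so the optimal $X$ is $Y=V\diag(x)V^T$, where, writing $\beta=\epsilon/\log n$ for the entropy weight and discarding the additive constant $-\beta\log n$, the vector $x\in\reals^n$ solves
\[
\max_{x\in\reals^n}~\sum_{i=1}^n y_i x_i-\beta\sum_{i=1}^n x_i\log x_i
\quad\text{ s.t. }\quad \ones^Tx=1,\quad x_i\ge\epsilon/n .
\]
This is a strictly concave program over a nonempty compact set (nonempty since $\epsilon<1$), hence it has a unique solution.

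Next I would dualize only the equality constraint $\ones^Tx=1$, with multiplier $\lambda$, keeping the box constraint explicit. The partial maximization over $\{x_i\ge\epsilon/n\}$ then decouples into the scalar problems $\max_{t\ge\epsilon/n}\{(y_i+\lambda)t-\beta t\log t\}$ (up to the sign convention on $\lambda$), each concave with unconstrained maximizer $e^{(y_i+\lambda)/\beta-1}$; its solution is therefore the clipped value $x_i=\max\{\epsilon/n,\,e^{(y_i+\lambda)/\beta-1}\}$, and by complementary slackness its optimal value is the relevant one of the two expressions inside the brace in~\eqref{eq:dual-grad}, according to whether the bound $x_i\ge\epsilon/n$ is active. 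Slater's condition holds ($X=\idm/n$ is strictly feasible because $\epsilon<1$), so strong duality applies and the correct $\lambda$ is the minimizer of the resulting univariate dual, i.e.\ precisely the problem~\eqref{eq:dual-grad}; equivalently $\lambda$ is pinned down by the feasibility identity $\sum_i\max\{\epsilon/n,\,e^{(y_i+\lambda)/\beta-1}\}=1$, whose left-hand side is continuous and strictly increasing in $\lambda$ from $\epsilon<1$ to $+\infty$, so such a $\lambda$ exists and is unique.

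Finally, since $f$ is differentiable (Lemma~\ref{lem:smoothing}) and is a pointwise maximum (over feasible $X$) of functions affine in $Z$ with the inner objective strictly concave in $X$, Danskin's (envelope) theorem gives $\nabla f(Z)=X^{\star}$, the unique maximizer in~\eqref{eq:f-smooth}, which by the previous steps equals $Y$. The step I expect to be the main obstacle is the active-set bookkeeping in the middle paragraph: one must verify that the clipping formula is exactly the KKT point of the scalar program, that the per-coordinate dual value collapses to the form written in~\eqref{eq:dual-grad}, and that exchanging the maximization over $x$ with the minimization over $\lambda$ is legitimate; the spectral reduction and the gradient identity are then routine.
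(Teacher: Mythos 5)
Your proposal follows essentially the same route as the paper's (very terse) proof: reduce~\eqref{eq:f-smooth} to a separable problem over the eigenvalues using the spectral structure of the objective and constraints, dualize the trace constraint to obtain the clipped exponential formula for $x$ and the univariate problem in $\lambda$, and read off $\nabla f(Z)$ from the maximizer since $f$ is a maximum of functions affine in $Z$. The one point you rightly flag as needing care --- matching the per-coordinate dual value to the braced expression in~\eqref{eq:dual-grad} --- is indeed the delicate bookkeeping (the two expressions are tangent at the crossover and the constrained per-coordinate optimum is the \emph{larger} of the two, so the inner $\min$ in~\eqref{eq:dual-grad} as printed must be read as a $\max$ under your sign convention), but otherwise your argument coincides with the paper's.
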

\begin{proof}
The function $f(Z)$ is spectral so solving~\eqref{eq:f-smooth} is equivalent to solving
\[
\max_{\substack{\ones^Tx=1,\\ x \geq (\epsilon/n)}}~ y^Tx - \frac{\epsilon}{\log n} \sum_{i=1}^n x_i\log x_i
\]
whose dual is 
\[
\min_\lambda ~\max_{x \geq \epsilon/n} ~ (y-\lambda \ones)^Tx - \frac{\epsilon}{\log n} \sum_{i=1}^n x_i\log x_i - \lambda
\]
which is equivalent to~\eqref{eq:dual-grad}. Now, the fact that $f(Z)$ is a maximum of affine functions of $Z$ shows that $\nabla f(Z)=Y$.
\end{proof}

Besides computing the gradient, the main cost at each iteration is the problem of solving the $n$ subproblems~\eqref{eq:aff-fw}. We will see that when $\nabla f(Z)$ is positive definite, then this can be done in closed form, with complexity $O(n^2\log n)$. Furthermore, the matrices $Y_i$ do not need to be stored, only their sum is required.
\begin{lemma}\label{lem:aff-fw}
Given $X\in\symm_n$ such that $X\succ 0$, together with $B_i=a_ia_i^T-\rho \idm$ for some $\rho>0$, we have
\[
\min_{\substack{Y\succeq B_i\\Y\succeq 0}} \Tr(XY) ~ = ~\Tr(X^{1/2}B_iX^{1/2})_+
\]
and the optimal solution has rank one and is given by
\[\left\{\BA{ll}
Y=X^{-1/2}vv^TX^{1/2}B_i, & \mbox{if }\rho<\|a_i\|_2^2,\\
0 & \mbox{otherwise,}
\EA\right.
\]
where $v$ is the leading eigenvector of the matrix $X^{1/2}B_iX^{1/2}$.
\end{lemma}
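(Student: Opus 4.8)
The plan is to collapse this semidefinite program to a trivial one via the congruence change of variables $Y = X^{-1/2}ZX^{-1/2}$, which is legitimate since $X\succ 0$. Under it, $Y\succeq 0 \iff Z\succeq 0$; the constraint $Y\succeq B_i = a_ia_i^T-\rho\idm$ becomes $Z\succeq M$ with $M\triangleq X^{1/2}B_iX^{1/2}$; and the objective transforms as $\Tr(XY)=\Tr(X^{1/2}ZX^{-1/2})=\Tr(Z)$. So the problem is equivalent to $\min\{\Tr(Z): Z\succeq M,\ Z\succeq 0\}$, and it suffices to solve this and map the solution back.

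Next I would show the optimal $Z$ is $M_+$, the positive part of $M$. It is feasible, since $M_+\succeq 0$ and $M_+-M = M_-\succeq 0$. For optimality, take a spectral decomposition $M=\sum_j \mu_j u_ju_j^T$: any feasible $Z$ satisfies $u_j^TZu_j\geq \mu_j$ (from $Z\succeq M$) and $u_j^TZu_j\geq 0$ (from $Z\succeq 0$), hence $u_j^TZu_j\geq(\mu_j)_+$ and $\Tr(Z)=\sum_j u_j^TZu_j \geq \sum_j(\mu_j)_+=\Tr M_+=\Tr(X^{1/2}B_iX^{1/2})_+$. This is the claimed value. (One can even note that equality forces $u_j^TZu_j=0$, hence $Zu_j=0$, for every $\mu_j\le 0$, so the optimal $Z$ is unique and equals $M_+$ — not strictly needed.)

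To pin the rank I would invoke Sylvester's law of inertia: since $X\succ 0$, $M=X^{1/2}B_iX^{1/2}$ is congruent to $B_i=a_ia_i^T-\rho\idm$, whose eigenvalues are $\|a_i\|_2^2-\rho$ once and $-\rho$ with multiplicity $n-1$. So $M$ has exactly one positive eigenvalue when $\rho<\|a_i\|_2^2$ and none otherwise; accordingly $M_+$ has rank one (resp. zero), and the optimal value equals $\lambdamax(M)_+$.

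Finally I would translate back. When $\rho<\|a_i\|_2^2$, write $\lambda=\lambdamax(M)>0$ and let $v$ be the associated unit eigenvector, so $M_+=\lambda vv^T$ and $Y=X^{-1/2}M_+X^{-1/2}=\lambda X^{-1/2}vv^TX^{-1/2}$, a rank-one PSD matrix with $\Tr(XY)=v^TX^{1/2}B_iX^{1/2}v=\lambda$, as required; when $\rho\geq\|a_i\|_2^2$ the minimizer is $Y=0$. To recover the compact formula in the statement, use $X^{1/2}B_iX^{1/2}v=\lambda v$, equivalently $v^TX^{1/2}B_i=\lambda v^TX^{-1/2}$, so that $X^{-1/2}vv^TX^{1/2}B_i=\lambda X^{-1/2}vv^TX^{-1/2}=Y$. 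There is no substantive obstacle here; the only points to handle with care are the congruence bookkeeping (in particular the cyclic manipulation giving $\Tr(XY)=\Tr(Z)$) and checking that the terse expression $X^{-1/2}vv^TX^{1/2}B_i$ agrees with the transparent form $\lambda X^{-1/2}vv^TX^{-1/2}$.
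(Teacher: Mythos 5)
Your proof is correct, and it takes a genuinely different route from the paper's. The paper proves the lemma by invoking the duality identity from d'Aspremont et al.\ (2008), namely $\Tr(X^{1/2}B_iX^{1/2})_+=\max_{0\preceq P\preceq X}\Tr(PB_i)=\min_{\{Y\succeq B_i,\,Y\succeq 0\}}\Tr(YX)$, identifying the optimal $P=X^{1/2}vv^TX^{1/2}$ via Sylvester's theorem, and then recovering $Y$ from the KKT complementarity condition $XY=PB_i$. You instead perform the congruence change of variables $Z=X^{1/2}YX^{1/2}$, which reduces the SDP to $\min\{\Tr(Z):Z\succeq M,\ Z\succeq 0\}$ with $M=X^{1/2}B_iX^{1/2}$, and solve that problem from scratch: the eigenbasis inequality $u_j^TZu_j\geq(\mu_j)_+$ gives the lower bound $\Tr(M_+)$, feasibility of $M_+$ gives attainment, and Sylvester's inertia gives the rank-one structure; the verification that $X^{-1/2}vv^TX^{1/2}B_i=\lambda X^{-1/2}vv^TX^{-1/2}$ correctly reconciles the paper's terse formula with the transparent one (and incidentally makes the symmetry of the stated minimizer evident, which the paper's formula obscures). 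What each buys: the paper's argument is shorter given the earlier machinery and keeps the dual variable $P$ visible, which ties into the rest of Section 6; yours is fully self-contained, avoids any appeal to strong duality or KKT conditions, and as a bonus yields uniqueness of the minimizer. Both are valid; yours is arguably the cleaner stand-alone proof.
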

\begin{proof}
Recall from \cite{dAsp08b} that
\[\BA{ll}
\Tr(X^{1/2}B_iX^{1/2})_+&=\dsp\max_{\{0\preceq P \preceq X\}} \Tr(PB_i)\\
&=\dsp\min_{\{Y\succeq B,~Y\succeq 0\}} \Tr(YX),
\EA\]
and a solution to the dual of problem~\eqref{eq:aff-fw} can be obtained from the solution to
\[
\max_{\{0\preceq Q \preceq \idm\}} \Tr(QX^{1/2}B_iX^{1/2}).
\]
By Sylvester's theorem, when $\rho<\|a_i\|_2^2$, the matrix $X^{1/2}B_iX^{1/2}$ has exactly one nonnegative eigenvalue, so the optimal solution to this last problem is $Q=vv^T$ where $v$ is the leading eigenvector of the matrix $X^{1/2}B_iX^{1/2}$. This means that the optimal dual solution is $P=X^{1/2}vv^TX^{1/2}$. Finally, the KKT optimality conditions impose $XY=PB_i$, which together with $X\succ 0$ yields the desired result.
\end{proof}

The last lemma shows that solving problem~\eqref{eq:aff-fw} requires the following steps. Assume $X^{-1}$ and $X^{1/2}$ have been precomputed, we first form the matrix $X^{1/2}B_iX^{1/2}$ at cost $O(n^2)$. We then compute its leading eigenvector at cost $O(n^2\log n)$ and form the rank one matrix $P=X^{1/2}vv^TX^{1/2}$ at cost $O(n^2)$. Because $P$ is rank one, computing $X^{-1}PB_i$ also costs $O(n^2)$. This means that the total cost of solving problem~\eqref{eq:aff-fw} is bounded by $O(n^2\log n)$. Furthermore, without loss of generality, we can restrict the matrices $Y_i$ to have norm less than
\[
B=\left\|\sum_{i=1}^n B_i^+\right\|_2
\]
which means we can assume the feasible set of problem~\eqref{eq:f-prob} is compact.

\begin{proposition}\label{prop:fw-cost}
Assume $A\in\reals^{n\times n}$ and $\rho>0$, Algorithm~\ref{alg:fw} will produce and $\epsilon$ solution to problem~\eqref{eq:f-prob} in
\[
O\left(\frac{D^2 \log n}{\epsilon^2}\right)
\]
iterations, where $D$ is the trace norm diameter of the feasible set
\[
D=\diam \left\{\sum_{i=1}^nY_i : Y_i\succeq B,\,Y_i\succeq 0,\,\|Y_i\|_2\leq B\right\}.
\]
Each iteration has complexity $O(n^3\log n)$ and storage cost $O(n^2)$.
\end{proposition}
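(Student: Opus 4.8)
The plan is to combine the standard convergence rate for the Frank--Wolfe algorithm on smooth convex objectives over a compact convex set with the Lipschitz bound on $\nabla f$ from Lemma~\ref{lem:smoothing} and the per-iteration cost estimate derived from Lemma~\ref{lem:aff-fw}. First I would set up the optimization problem~\eqref{eq:f-prob} as $\min_{Z\in\mathcal{D}} f(Z)$, where $\mathcal{D}=\{\sum_{i=1}^n Y_i : Y_i\succeq B_i,\,Y_i\succeq 0,\,\|Y_i\|_2\leq B\}$ is convex and compact (compactness having been justified just before the statement by the bound $B=\|\sum_i B_i^+\|_2$), with trace-norm diameter $D$. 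The objective $f$ is convex with gradient Lipschitz of constant $L_f\leq \log n/\epsilon$ in the trace norm by Lemma~\ref{lem:smoothing}.

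Next I would invoke the classical Frank--Wolfe guarantee (e.g.\ the $O(1/N)$ bound of \citet{Jagg13} or the original analysis of Frank and Wolfe): running the linearized-subproblem iteration $Z_k = (1-\gamma_k)Z_{k-1} + \gamma_k W_k$ with the step sizes $\gamma_k = 2/(k+2)$ used in Algorithm~\ref{alg:fw}, where $W_k\in\arg\min_{W\in\mathcal{D}}\Tr(W\nabla f(Z_{k-1}))$, produces after $N$ iterations a point with $f(Z_N) - \min_{\mathcal{D}} f \leq \frac{2 L_f D^2}{N+2}$. Substituting $L_f\leq \log n/\epsilon$ and demanding this gap be at most $\epsilon$ yields $N = O(D^2\log n/\epsilon^2)$ iterations, which is the claimed bound. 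Here I would note that the linear subproblem $\min_{W\in\mathcal{D}}\Tr(W X)$ with $X=\nabla f(Z_{k-1})$ separates across the $n$ blocks $Y_i$ exactly into the $n$ subproblems~\eqref{eq:aff-fw}, and that $X=\nabla f(Z)$ is positive definite by the explicit lower bound $X\succeq(\epsilon/n)\idm$ enforced in~\eqref{eq:f-smooth}, so Lemma~\ref{lem:aff-fw} applies and each subproblem is solved in closed form.

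For the per-iteration complexity I would account for the three ingredients: (i) computing $\nabla f(Z)$, which by the preceding lemma reduces to an eigendecomposition of $Z$ plus a one-dimensional root-finding step~\eqref{eq:dual-grad}, costing $O(n^3)$ for the eigendecomposition and negligible extra work, and then reassembling $X$, $X^{-1}$, $X^{1/2}$ from the eigenbasis at cost $O(n^3)$; (ii) solving the $n$ subproblems~\eqref{eq:aff-fw}, each costing $O(n^2\log n)$ by the discussion after Lemma~\ref{lem:aff-fw}, for a total of $O(n^3\log n)$; (iii) forming $W=\sum_i Y_i$ and the convex update, at cost $O(n^2)$ per block, i.e.\ $O(n^3)$ overall. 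The dominant term is $O(n^3\log n)$, giving the stated iteration cost; the storage cost is $O(n^2)$ since only the running sums $Z_{k-1}$ and $W$ are retained and the individual $Y_i$ (rank one) are discarded after being added.

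The main obstacle is bookkeeping rather than any deep difficulty: one must verify that the step-size schedule $\gamma_k=2/(k+2)$ in Algorithm~\ref{alg:fw} is exactly the one for which the textbook $2L_fD^2/(N+2)$ bound holds, and that the ``curvature constant'' appearing in the sharpest versions of that bound is indeed controlled by $L_f D^2$ in the trace norm (rather than in the Frobenius norm), which is precisely why Lemma~\ref{lem:smoothing} was stated with respect to the trace norm and why $D$ is defined as the trace-norm diameter. A secondary point to check carefully is that the approximate root $\lambda$ in~\eqref{eq:dual-grad} can be computed to sufficient accuracy without inflating the complexity, and that the resulting $X$ genuinely satisfies $X\succ 0$ so that Lemma~\ref{lem:aff-fw} is applicable at every iteration.
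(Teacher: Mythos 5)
Your proposal is correct and follows essentially the same route as the paper's (one-line) proof: invoke the standard Frank--Wolfe $O(L_f D^2/N)$ guarantee with the step sizes $2/(k+2)$, control the curvature via the Lipschitz constant $L_f\leq \log n/\epsilon$ from Lemma~\ref{lem:smoothing}, and use Lemma~\ref{lem:aff-fw} for the per-iteration cost. Your version simply makes explicit the separability of the linear oracle across the $n$ blocks and the $O(n^3\log n)$ accounting, which the paper leaves implicit.
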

\begin{proof}
We use the complexity bounds in \citep{Fran56,Clar10,Jagg11} for example and Lemma~\eqref{lem:smoothing} to control the curvature of $f$.
\end{proof}


\section{Numerical Experiments}\label{s:numexp}
We test the detection procedure based on $\psi(\rho)$ described in~\eqref{eq:test-psi}. We generate $3000$ experiments, where $m$ points $x_i\in\reals^n$ are sampled under both hypotheses, with
\[\left\{\BA{rl}
\mathcal{H}_0:&x \sim\mathcal{N}\left(0,\idm_n\right)\\
\mathcal{H}_1: &x \sim\mathcal{N}\left(0,\idm_n+\theta vv^T\right)
\EA\right.\]
as in~\eqref{eq:hyp}. In each experiment, we pick the leading dimension $n=100$, the number of samples $m=50$ and the cardinality $k=20$. We set $\theta=3$, $v_i=1/\sqrt{k}$ when $i\in[1,k]$ and zero otherwise. In Figure~\ref{fig:test-psi} we plot the distributions of the test statistic $\psi(\rho)$ defined in~\eqref{eq:test-psi}, the MDP statistic in \citep{Bert12}, the $\lambdamax(\cdot)$ statistic and the diagonal statistic in \citep{Amin08}. As in \citep{Bert12}, we observe that all the test statistics perform very similarly, except for the diagonal test.

\begin{figure}[!ht]
\begin{center}
\begin{tabular}{cc}
\psfrag{x}[t][b]{$\psi(\rho)$}
\psfrag{h0}[c][c]{$\mathcal{H}_0$}
\psfrag{h1}[c][c]{$\mathcal{H}_1$}
\psfragfig[width=.4\textwidth]{./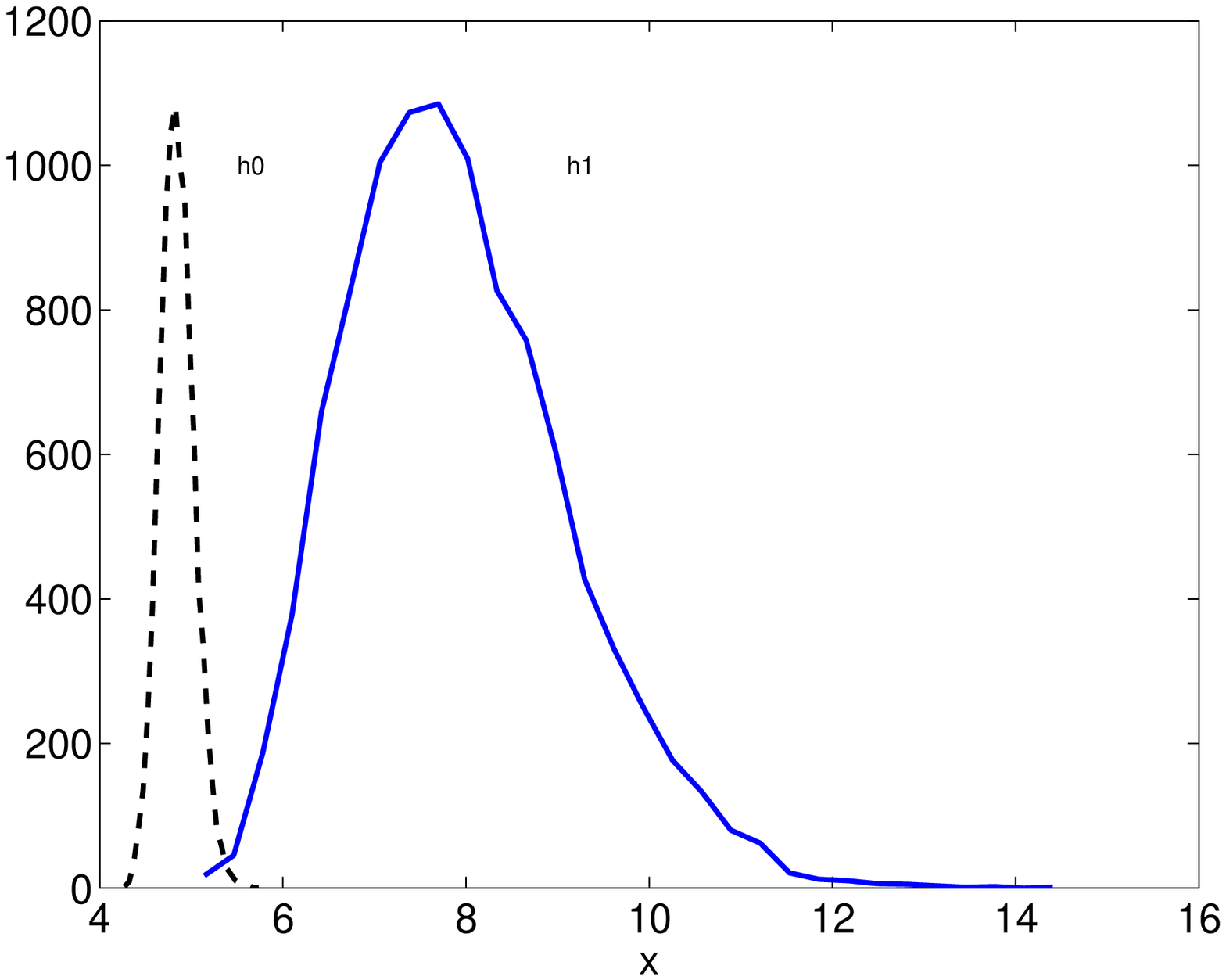}& 
\psfrag{x}[t][b]{$MDP_k$}
\psfrag{h0}[l][c]{$\mathcal{H}_0$}
\psfrag{h1}[l][c]{$\mathcal{H}_1$}
\psfragfig[width=.4\textwidth]{./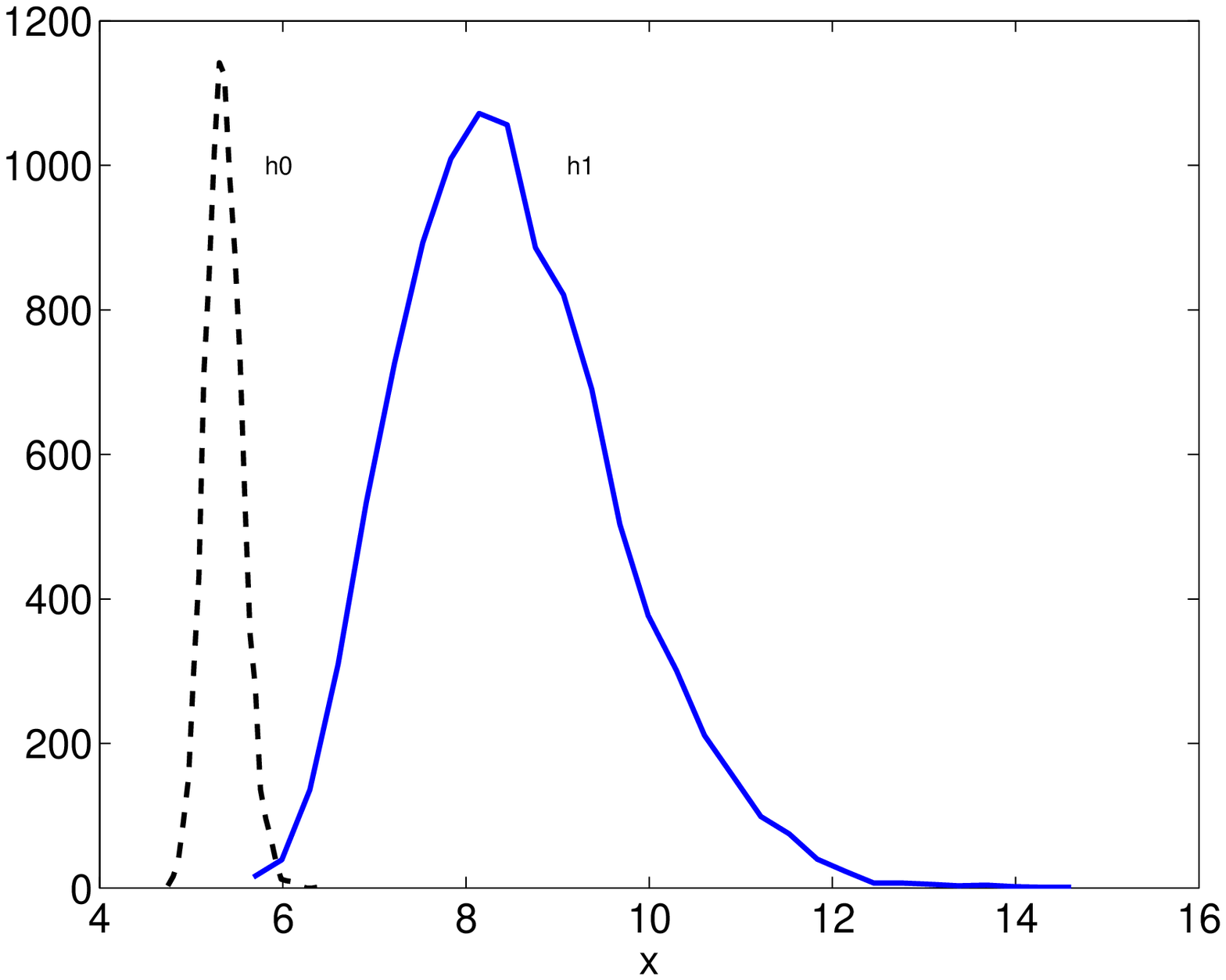}\\
\psfrag{x}[t][b]{$\lambdamax(\hat \Sigma)$}
\psfrag{h0}[l][c]{$\mathcal{H}_0$}
\psfrag{h1}[l][c]{$\mathcal{H}_1$}
\psfragfig[width=.4\textwidth]{./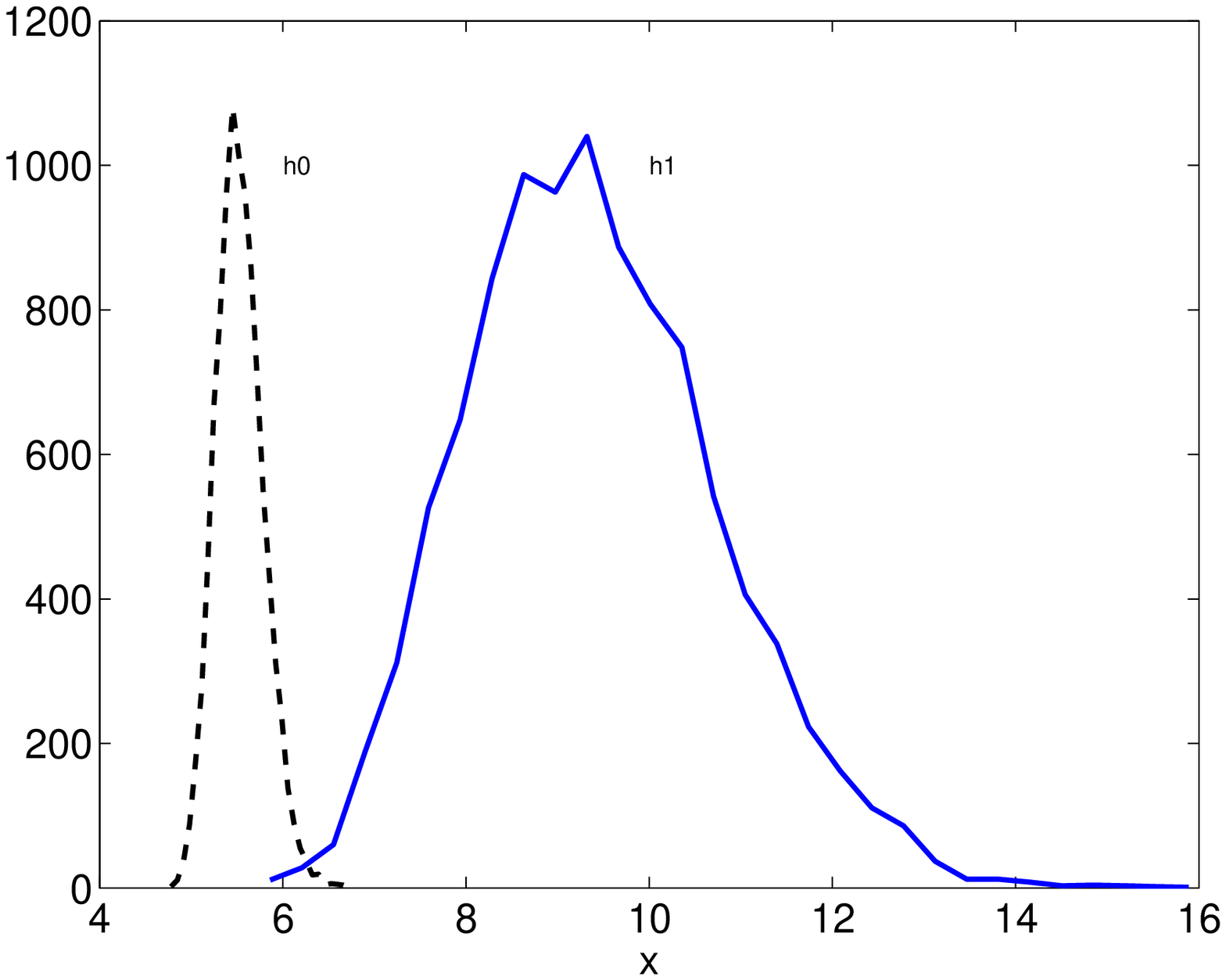}& 
\psfrag{x}[t][b]{$\max(\diag(\hat \Sigma))$}
\psfrag{h0}[c][c]{$\mathcal{H}_0$}
\psfrag{h1}[c][c]{$\mathcal{H}_1$}
\psfragfig[width=.4\textwidth]{./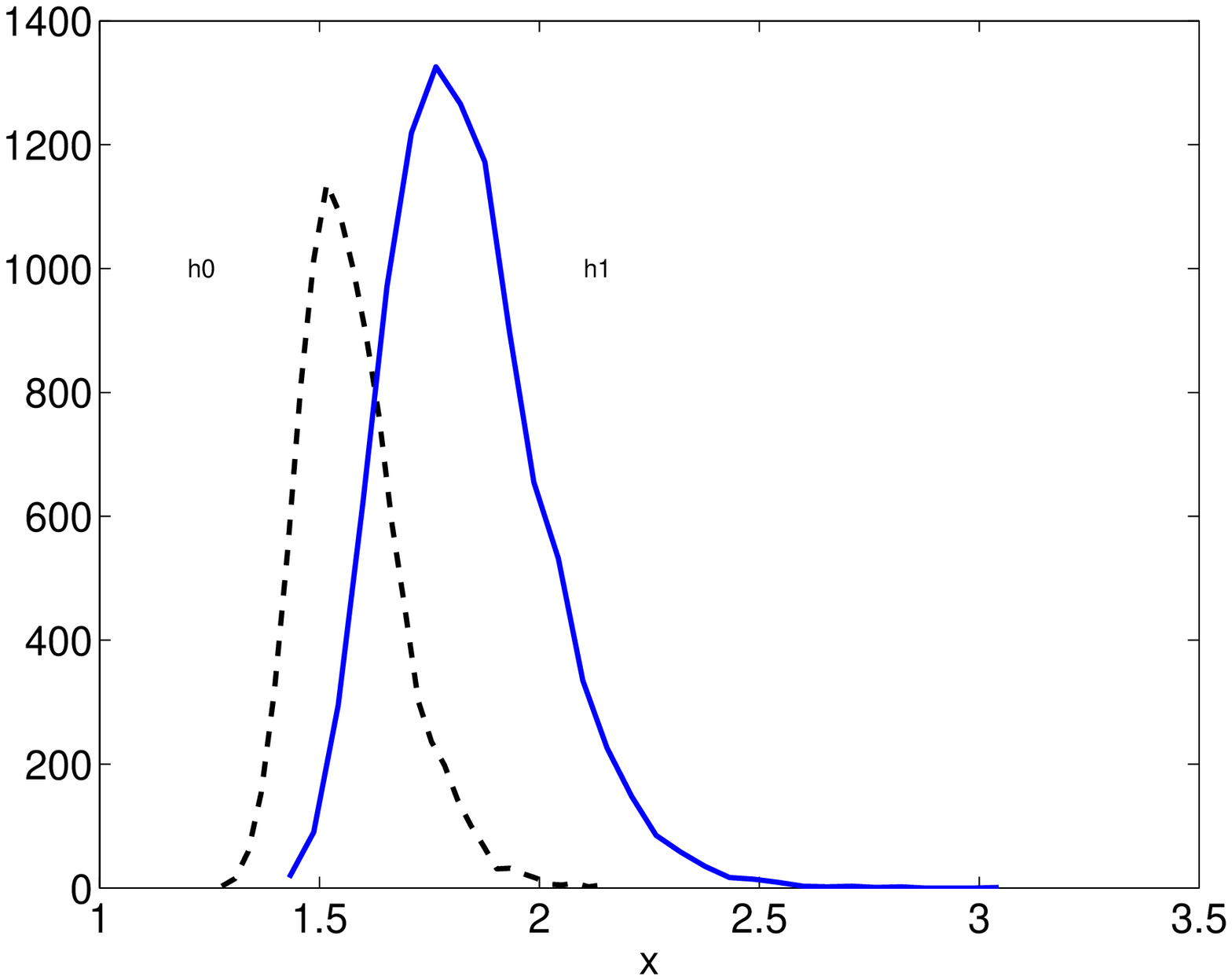}
\end{tabular}
\caption{Distribution of test statistic $\psi(\rho)$ {\em (top left)}, the $MDP_k$ statistic in \citep{Bert12} {\em (top right)}, the $\lambdamax(\cdot)$ statistic {\em (bottom left)} and the diagonal statistic from \citep{Amin08} {\em (bottom right)} under both $\mathcal{H}_0$ and $\mathcal{H}_1$. All experiments are performed on random Gaussian matrices with ambient dimension $n=100$, $m=50$ samples and cardinality for $v$ under $\mathcal{H}_1$ set to $k=20$. \label{fig:test-psi}}
\end{center}
\end{figure}

\section*{Acknowledgments}  AA would like to acknowledge partial support from NSF grants SES-0835550 (CDI), CMMI-0844795 (CAREER), CMMI-0968842, a starting grant from the European Research Council (project SIPA), a Peek junior faculty fellowship, a Howard B. Wentz Jr. award and a gift from Google. The authors would like to thank Philippe Rigollet and Quentin Berthet for very constructive discussions and introducing them to the detection problem in Section~\ref{s:detect}.

\small{\bibliographystyle{plainnat}
\bibsep 1ex
\bibliography{MainPerso}}

\begin{thebibliography}{21}
\providecommand{\natexlab}[1]{#1}
\providecommand{\url}[1]{\texttt{#1}}
\expandafter\ifx\csname urlstyle\endcsname\relax
  \providecommand{\doi}[1]{doi: #1}\else
  \providecommand{\doi}{doi: \begingroup \urlstyle{rm}\Url}\fi

\bibitem[Amini and Wainwright(2009)]{Amin08}
A.A. Amini and M.~Wainwright.
\newblock High-dimensional analysis of semidefinite relaxations for sparse
  principal components.
\newblock \emph{The Annals of Statistics}, 37\penalty0 (5B):\penalty0
  2877--2921, 2009.

\bibitem[Ben-Tal and Nemirovski(2002)]{Ben-02}
A.~Ben-Tal and A.~Nemirovski.
\newblock On tractable approximations of uncertain linear matrix inequalities
  affected by interval uncertainty.
\newblock \emph{{SIAM} Journal on Optimization}, 12\penalty0 (3):\penalty0
  811--833, 2002.

\bibitem[Ben-Tal and Nemirovski(2005)]{BenT04}
A.~Ben-Tal and A.~Nemirovski.
\newblock Non-{E}uclidean restricted memory level method for large-scale convex
  optimization.
\newblock \emph{Mathematical Programming}, 102\penalty0 (3):\penalty0 407--456,
  2005.

\bibitem[Benaych-Georges et~al.(2011)Benaych-Georges, Guionnet, and
  Maida]{Bena11}
F.~Benaych-Georges, A.~Guionnet, and M.~Maida.
\newblock Fluctuations of the extreme eigenvalues of finite rank deformations
  of random matrices.
\newblock \emph{Electron. J. Probab.}, 16:\penalty0 no. 60, 1621--1662, 2011.
\newblock ISSN 1083-6489.
\newblock \doi{10.1214/EJP.v16-929}.
\newblock URL \url{http://dx.doi.org/10.1214/EJP.v16-929}.

\bibitem[Berthet and Rigollet(2012)]{Bert12}
Q.~Berthet and P.~Rigollet.
\newblock Optimal detection of sparse principal components in high dimension.
\newblock \emph{Arxiv preprint arXiv:1202.5070}, 2012.

\bibitem[Candes and Tao(2007)]{Cand07}
E.~Candes and T.~Tao.
\newblock {The Dantzig selector: statistical estimation when p is much larger
  than n}.
\newblock \emph{Annals of Statistics}, 35\penalty0 (6):\penalty0 2313--2351,
  2007.

\bibitem[Clarkson(2010)]{Clar10}
K.L. Clarkson.
\newblock Coresets, sparse greedy approximation, and the {F}rank-{W}olfe
  algorithm.
\newblock \emph{ACM Transactions on Algorithms (TALG)}, 6\penalty0
  (4):\penalty0 63, 2010.

\bibitem[d'Aspremont et~al.(2007)d'Aspremont, El~Ghaoui, Jordan, and
  Lanckriet]{dasp04a}
A.~d'Aspremont, L.~El~Ghaoui, M.I. Jordan, and G.~R.~G. Lanckriet.
\newblock A direct formulation for sparse {PCA} using semidefinite programming.
\newblock \emph{SIAM Review}, 49\penalty0 (3):\penalty0 434--448, 2007.

\bibitem[d'Aspremont et~al.(2008)d'Aspremont, Bach, and El~Ghaoui]{dAsp08b}
A.~d'Aspremont, F.~Bach, and L.~El~Ghaoui.
\newblock Optimal solutions for sparse principal component analysis.
\newblock \emph{Journal of Machine Learning Research}, 9:\penalty0 1269--1294,
  2008.

\bibitem[El~Ghaoui(2006)]{El-G06}
L.~El~Ghaoui.
\newblock On the quality of a semidefinite programming bound for sparse
  principal component analysis.
\newblock \emph{ArXiV math.OC/0601448}, 2006.

\bibitem[Frank and Wolfe(1956)]{Fran56}
M.~Frank and P.~Wolfe.
\newblock An algorithm for quadratic programming.
\newblock \emph{Naval research logistics quarterly}, 3\penalty0 (1-2):\penalty0
  95--110, 1956.

\bibitem[Goemans and Williamson(1995)]{Goem95}
M.X. Goemans and D.P. Williamson.
\newblock Improved approximation algorithms for maximum cut and satisfiability
  problems using semidefinite programming.
\newblock \emph{J. ACM}, 42:\penalty0 1115--1145, 1995.

\bibitem[Jaggi(2011)]{Jagg11}
M.~Jaggi.
\newblock Convex optimization without projection steps.
\newblock \emph{Arxiv preprint arXiv:1108.1170}, 2011.

\bibitem[Journ{\'e}e et~al.(2008)Journ{\'e}e, Bach, Absil, and
  Sepulchre]{Jour08a}
M.~Journ{\'e}e, F.~Bach, P.A. Absil, and R.~Sepulchre.
\newblock {Low-rank optimization for semidefinite convex problems}.
\newblock \emph{Arxiv preprint arXiv:0807.4423}, 2008.

\bibitem[Journ\'ee et~al.(2008)Journ\'ee, Nesterov, Richt\'arik, and
  Sepulchre]{Jour08}
M.~Journ\'ee, Y.~Nesterov, P.~Richt\'arik, and R.~Sepulchre.
\newblock Generalized power method for sparse principal component analysis.
\newblock \emph{arXiv:0811.4724}, 2008.

\bibitem[Mezard and Montanari(2009)]{Meza09}
M.~Mezard and A.~Montanari.
\newblock \emph{Information, physics, and computation}.
\newblock Oxford University Press, USA, 2009.

\bibitem[Mezard et~al.(1987)Mezard, Parisi, and Virasoro]{Meza87}
M.~Mezard, G.~Parisi, and M.A. Virasoro.
\newblock \emph{Spin glass theory and beyond}, volume~9.
\newblock World scientific Singapore, 1987.

\bibitem[Nesterov(2005)]{Nest03}
Y.~Nesterov.
\newblock Smooth minimization of non-smooth functions.
\newblock \emph{Mathematical Programming}, 103\penalty0 (1):\penalty0 127--152,
  2005.

\bibitem[Spielman and Teng(2001)]{Spie01}
D.~Spielman and S.H. Teng.
\newblock Smoothed analysis of algorithms: Why the simplex algorithm usually
  takes polynomial time.
\newblock In \emph{Proceedings of the thirty-third annual ACM symposium on
  Theory of computing}, pages 296--305. ACM, 2001.

\bibitem[Talagrand(2010)]{Tala10}
M.~Talagrand.
\newblock \emph{Mean Field Models for Spin Glasses: Basic examples}, volume~1.
\newblock Springer Verlag, 2010.

\bibitem[Zwick(1999)]{Zwic99}
U.~Zwick.
\newblock Outward rotations: a tool for rounding solutions of semidefinite
  programming relaxations, with applications to max cut and other problems.
\newblock In \emph{Proceedings of the thirty-first annual ACM symposium on
  Theory of computing}, pages 679--687. ACM, 1999.

\end{thebibliography}
\end{document}